\newcommand{\bb}[1]{\left({#1}\right)}					
\newcommand{\sq}[1]{\left[#1\right]}						
\newcommand{\cc}[1]{\left\{#1\right\}}					
\newcommand{\op}[1]{\mathcal{#1}}
\newcommand{\ord}[1]{{\sf O}\bb{#1}}					
\newcommand{\sfrac}[2]{\mbox{$\frac{#1}{#2}$}}	
\newcommand{\hf}{\mbox{$\frac12$}}
\newcommand{\e}{e }
\newtheorem{theorem}{Theorem}
\newtheorem{lemma}[theorem]{Lemma}
\newtheorem{proposition}[theorem]{Proposition}
\newcommand{\sign}{\operatorname{sign}}
\newcommand{\fref}[1]{figure~\ref{#1}}
\newcommand{\Fref}[1]{Figure~\ref{#1}}
\newcommand{\eref}[1]{(\ref{#1})}
\newcommand{\sref}[1]{section~\ref{#1}}
\def\eps{\varepsilon}
\begin{document}

\title{Fold singularities of nonsmooth and slow-fast dynamical systems -- equivalence through regularization}
\author{Mike R. Jeffrey}\address{Dept. of Engineering Mathematics, University of Bristol, Merchant Venturer's Building, Bristol BS8 1UB, UK, email: mike.jeffrey@bristol.ac.uk}
\date{\today}

\begin{abstract} 
The {\it two-fold singularity} has played a significant role in our understanding of uniqueness and stability in piecewise smooth dynamical systems. When a vector field is discontinuous at some hypersurface, it can become tangent to that surface from one side or the other, and tangency from both sides creates a two-fold singularity. The local flow bears a superficial resemblance to so-called {\it folded singularities} in (smooth) slow-fast systems, which arise at the intersection of attractive and repelling branches of slow invariant manifolds, important in the local study of canards and mixed mode oscillations. Here we show that these two singularities are intimately related. When the discontinuity in a piecewise smooth system is regularized (smoothed out) at a two-fold singularity, the resulting system can be mapped onto a folded singularity. The result is not obvious since it requires the presence of nonlinear or `hidden' terms at the discontinuity, which turn out to be necessary for structural stability of the regularization (or smoothing) of the discontinuity, and necessary for mapping to the folded singularity. 
\end{abstract}

\maketitle

\section{Introduction}\label{sec:intro}

If a flow is piecewise smooth, having a vector field that is discontinuous on some hypersurface $\Sigma$, then under generic conditions there can exist isolated singularities where the flow curves (or `folds' parabolically) towards or away from $\Sigma$ on both sides of the surface. The result is a two-fold singularity, as depicted in \fref{fig:2folddet}, generic in systems of three or more dimensions. 
\begin{figure}[h!]\centering\includegraphics[width=\textwidth]{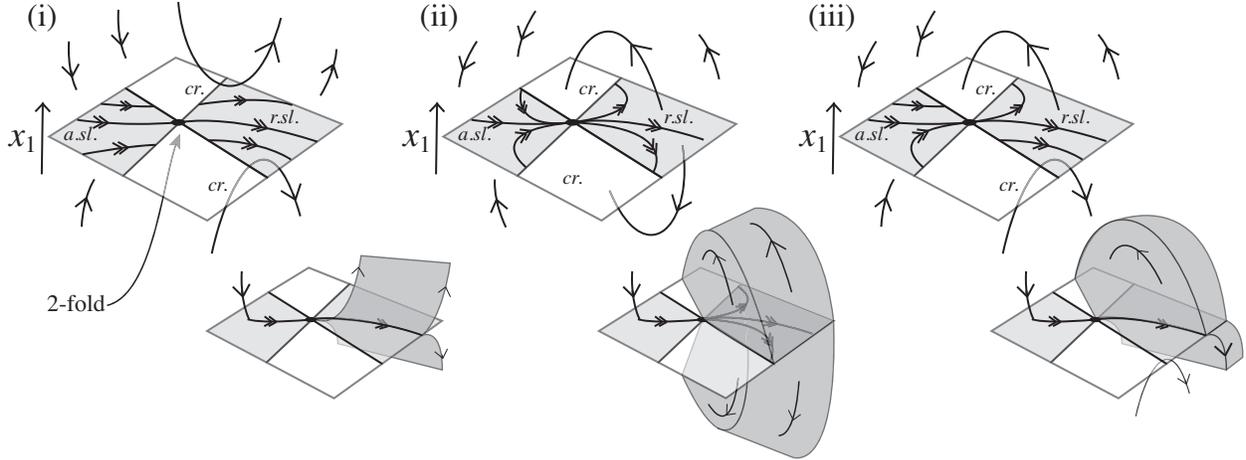}
\vspace{-0.3cm}\caption{\small\sf Three kinds of two-fold. The main figure shows the phase portrait: in the unshaded regions the flow crosses ($cr.$) through a discontinuity at $x_1=0$, in the shaded regions the flow can only {\it slide} along the discontinuity on $x_1=0$, the region being attracting ($a.sl.$) or repelling ($r.sl.$). In the examples shown, determinacy-breaking occurs at the singularity, meaning that the flow there becomes set-valued, the set has 2 dimensions in (i) and 3 dimensions in (ii-iii). }\label{fig:2folddet}\end{figure}
%

Consider a piecewise smooth system of the form
\begin{equation}\label{fns}
\dot {\bf x}
=\left\{\begin{array}{lll}{\bf f}({\bf x};+1)&\rm if&x_1>0\;,\\{\bf f}({\bf x};-1)&\rm if&x_1<0\;,\end{array}\right.
\end{equation}
where $\bf f$ is differentiable with respect to the variable ${\bf x}=(x_1,x_2,x_3)$
A two-fold singularity is a point ${\bf x}_p=(x_{1p},x_{2p},x_{3p})$ at which  
\begin{equation}\label{2fdef}
0=x_1=\lim_{\delta\rightarrow0}\left.\dot x_1\right|_{x_1=\pm\delta}\neq\lim_{\delta\rightarrow0}\left.\ddot x_1\right|_{x_1=\pm\delta}
\end{equation}
and for which the two curves defined by $\lim_{\delta\rightarrow0}\left.\dot x_1\right|_{x_1=\pm\delta}=0$ on $x_1=0$ are transversal. A more complete description can be found in \cite{jc12}. 

For a local singularity that is so easy to define, the `two-fold' singularity has proven surprisingly difficult to characterize, from its first description in \cite{f88,t90} in three dimensions, to its study in higher dimensions in \cite{jc12}. These exclusively consider the class of {\it Filippov} system obtained by expressing \eref{fns} as the convex combination
\begin{equation}\label{ffil}
\dot{\bf x}={\bf f}({\bf x};\lambda)=\frac{1+\lambda}2{\bf f}({\bf x};+1)+\frac{1-\lambda}2{\bf f}({\bf x};-1)\;,
\end{equation}
where 
\begin{equation}\label{sign}
\lambda\in\left\{\begin{array}{lll}\sign\bb{x_1}&\rm if&x_1\neq0\;,\\\sq{-1,+1}&\rm if&x_1=0\;.\end{array}\right.
\end{equation}
The dynamics on the discontinuity surfaces in \fref{fig:2folddet} arise, for example, from this expression. 
Early questions about the structural stability of two-fold singularities in such systems have been resolved by uncovering their intricate phase portraits, revealing various topologically stable phase portraits separated by bifurcations \cite{jc09,jc12,j12xing}, which include the birth of limit cycles, bifurcation of an invariant nonsmooth diabolo, and passage of equilibria through the singularity. In its structurally stable forms (i.e. away from any bifurcations), the two-fold is neither an attractor nor repellor, so the flow either misses the singularity, or traverses it in finite time. 

It is in the latter that important questions remain concerning the two-fold singularity, particularly in cases where it forms a bridge {\it from} an attracting region on the discontinuity surface {\it into} a repelling region, mimicking {\it canard} behaviour of smooth two-timescale systems \cite{b81}. In a loose definition suitable to both smooth and nonsmooth dynamics, canards are trajectories that persist from an attracting invariant manifold to a repelling invariant manifold. Numerous canards can be seen in \fref{fig:2folddet}. Such behaviour takes a more extreme form in nonsmooth systems, because the two-fold breaks determinacy in both forward and backward time through the singularity. This is illustrated in \fref{fig:2folddet}, where a typical single trajectory is shown entering the singularity, being deterministic until it does so, and afterwards exploding into a set-valued flow of infinite onward trajectories. This shape of this outset of the flow is determined by the local vector fields. 


Particularly because of some similarity to canard dynamics, attention has turned to how the two-fold can be understood as a limit or approximation of a smooth flow. An equivalence between ``sliding'' motion along a discontinuity surface, and ``slow'' motion on invariant manifolds of a smooth two-timescale system, has been shown \cite{st96,j15douglas}. Concerning two-folds, the relation between the sliding phase portraits at two-fold singularities, and canard dynamics of two-timescale systems, have received growing attention \cite{t07,tls08,jd10,jd12,hk15}. In \cite{jd10,jd12}, a qualitative association was made with the so-called `folded' singularities of two timescale systems. 

A folded singularity can be defined in a system
\begin{equation}\label{fsf}
\bb{\eps\dot x_1,\dot x_2,\dot x_3}=\bb{g_1(x_1,x_2,x_3;\eps),g_2(x_1,x_2,x_3;\eps),g_3(x_1,x_2,x_3;\eps)}\;,
\end{equation}
where the $g_i$ are differentiable, with $0<\eps\ll1$, as a point satisfying
\begin{equation}\label{fnodedef}
0=g_1=\dot g_1=\frac{\partial g_1}{\partial x_1}\;,\qquad\frac{\partial g_1}{\partial x_{2,3}}\neq0\neq\frac{\partial^2 g_1}{\partial x_1^2}\;.
\end{equation}
We shall prove here a more direct connection between the two singularities, by showing equivalence between the singularities defined by \eref{2fdef} and \eref{fnodedef} under explicit coordinate transformations. 

This approach offers a different viewpoint on desingularizing the two-fold singularity to that taken in \cite{t07,tls08,hk15}. There, blow up methods are used to show that a regularization (a smoothing) of the two-fold contains canards. The results there apply specifically to the Sotomayor-Teixeira regularization \cite{st96}, essentially replacing the sign function $\lambda$ in the convex combination \eref{ffil}, with a smooth sigmoid function of $x_1$. While not currently in common use, it is easy to show that more general forms of system are possible, replacing \eref{ffil} with the piecewise smooth systems 
\begin{equation}\label{fnon}
\dot{\bf x}={\bf f}({\bf x};\lambda)=\frac{1+\lambda}2{\bf f}({\bf x};+1)+\frac{1-\lambda}2{\bf f}({\bf x};-1)+(1-\lambda^2){\bf g}({\bf x};\lambda)\;,
\end{equation}
using \eref{sign}, which like \eref{ffil} coincides with \eref{fns} for $x_1\neq0$. The function $\bf g$ is an arbitrary finite vector field, and the possibility of nonlinear $\lambda$ dependence can be found discussed already in \cite{f88,u92,seidman96} (including an experimental model in \cite{seidman96}). Sometimes called `hidden' terms because they vanish everywhere except at the discontinuity, a general approach for handling nonlinear dependence on $\lambda$ was introduced conceptually in \cite{j13error}. The Sotomayor-Teixeira theory of regularization was extended to \eref{fnon} in \cite{j15douglas}. This provides regularized systems that are not included in the Sotomayor-Teixeira-Filippov approach through \eref{ffil}, an issue we will illustrate with a simple example in \sref{sec:prelude}. Such systems turn out to be essential for the equivalence we seek to prove here. 


In \sref{sec:2fold} we introduce the normal form of the two-fold singularity, and outline the basic steps for its study by regularizing the discontinuity in \sref{sec:blowup}. 
In \sref{sec:2fdummy} we regularize the normal piecewise smooth system, assuming only linear dependence on $\lambda$ as in the standard literature, but show that this results in a degenerate system. 
In \sref{sec:2fpert} we perturb this using nonlinear dependence on $\lambda$, finding that it breaks the degeneracyy, and can be mapped onto the folded singularity of a smooth two timescale system. Remarks showing that these results follow also if we blow up, rather than regularize, the discontinuity, are given in \sref{sec:prelude}.



\section{The two-fold singularity}\label{sec:2fold}

The normal form of the two-fold singularity is 
\begin{equation}\label{2f}
\bb{\dot x_1,\dot x_2,\dot x_3}=
\left\{\begin{array}{lll}\bb{-x_2,a_1,b_1}&\rm if&x_1>0\;,\\\bb{+x_3,b_2,a_2}&\rm if&x_1<0\;,\end{array}\right.
\end{equation}
in terms of constants $a_i=\pm1$ and $b_i\in\mathbb R$. By results in \cite{f88,t93,jc12}, a system is locally approximated by \eref{2f} when it satisfies the conditions in \eref{2fdef}. 
The brief outline of the dynamics that follows is not essential to the following sections, but we give it for completeness. 

The local flow `folds' towards or away from the switching surface $x_1=0$, along the line $x_2=x_1=0$ on one side of the surface, and along the line $x_3=x_1=0$ on the other. Hence the point where these lines cross is called the `{\it two}-fold'. As a result, the surface $x_1=0$ is attractive in $x_2,x_3>0$ and repulsive in $x_2,x_3<0$, while trajectories cross the surface transversely in $x_2x_3<0$. In the attractive and repulsive regions the flow {\it slides} along the surface $x_1=0$, and follows a vector field that is found by substituting \eref{2f} into \eref{ffil}, and solving for $\lambda$ such that $\dot x_1=0$. We will not discuss this sliding dynamics in detail, see for example \cite{jc12} and references therein. 

The qualitative picture is then as shown in \fref{fig:2folddet}. 
The precise form of the local dynamics depends on whether the flow curves towards or away from the discontinuity, determined by $a_1$ and $a_2$, and also depends crucially on the quantity $b_1b_2$, which quantifies the jump  in the angle of the flow across the discontinuity. An accounting of the many classes of dynamics that arise from these simple conditions is given in \cite{jc12}, we give only the pertinent details here.

The three main `flavours' of two-fold are: the visible two-fold for $a_1=a_2=-1$, the invisible two-fold for $a_1=a_2=1$, and the mixed two-fold for $a_1a_2=-1$; an example of each is shown in \fref{fig:2folddet} (i,ii,iii) respectively. The terms {\it visible} or {\it invisible} indicate that the flow is curving away from or towards the discontinuity surface, respectively. In the cases depicted, there exist one or more {\it canard} trajectories, passing from the attractive sliding region to the repelling sliding region. This passage occurs in finite time (since the vector field obtained by substituting \eref{2f} into \eref{ffil} is non-vanishing everywhere locally). The flow is unique in forward time everywhere except in the repelling sliding region, where it is set-valued because trajectories may slide along $x_1=0$, but may also be ejected into $x_1>0$ or $x_1<0$ at any point. This means that the flow may evolve deterministically until it arrives at the singularity by means of a canard, at which point it becomes set-valued, so we say that {determinacy breaking} occurs at the singularity whenever canards are exhibited. This occurs in the invisible case when $b_1,b_2<0$ and $b_1b_2>1$, in the visible case when $b_1<0$ or $b_2<0$ or $b_1b_2<1$, and finally in the mixed case when $b_1<0<b_2$ and $b_1b_2<-1$ or when $b_1+b_2<0$ and $b_1-b_2<-2$.
(The particular cases shown in \fref{fig:2folddet} are: (i) $a_1=a_2=-1$ with $b_1<0$ or $b_2<0$ or $b_1b_2<1$; (ii) $a_1=a_2=1$ with $b_1,b_2<0$ and $b_1b_2>1$; (iii) $a_1a_2=-1$ with $b_1<0<b_2$ and $b_1b_2<-1$ or with $b_1+b_2<0$ and $b_1-b_2<-2$.)

This brief review follows the conventional picture, obtained by substituting \eref{2f} into \eref{ffil}. We shall only make the more general substitution of \eref{2f} into \eref{fnon} when necessary, and will consider $\bf g$ to be a small perturbation, which will therefore have only a small effect on the dynamics outlined above; this can be studied more closely but is not our main concern here. Our interest now lies in what happens when we smooth out this system. 


%
%
%

\section{Regularizing the piecewise smooth system}\label{sec:blowup}

We shall first outline the basics of regularization in a general form. 
Let ${\bf x}=(x_1,x_2,x_3)$ and ${\bf f}=(f_1,f_2,f_3)$. 
We regularize the vector field in \eref{fnon} by replacing $\lambda$ with a smooth sigmoid function 
\begin{equation}\label{Peps}
\phi_\eps(x_1)\in\left\{\begin{array}{lll}\sign(x_1)&\rm if&|x_1|>\eps\\\sq{-1,+1}&\rm if&|x_1|\le\eps\end{array}\right\}+\ord{\eps}\;,\qquad \phi_\eps'(x_1)>0\;\;\mbox{for}\;|x_1|<\eps\;,
\end{equation}
for some small positive parameter $\eps$. In the Sotomayor-Teixeira regularization it is usually assumed more strongly that $\phi_\eps(x_1)={\rm sign}(v)$ for $|x_1|>\eps$, without the $\ord\eps$ term, but this not crucial to our results. 
It is useful to introduce a fast variable $u=x_1/\eps$. Noting by \eref{Peps} that $\phi_\eps(\eps u)=\phi_1(u)$, we obtain
\begin{equation}\label{blowup}
(\eps\dot u,\dot x_2,\dot x_3)=\bb{f_1(\eps u,x_2,x_3;\phi_1(u),\;f_2(\eps u,x_2,x_3;\phi_1(u)),\;f_3(\eps u,x_2,x_3;\phi_1(u))}\;.
\end{equation}

This is the regularization of \eref{fnon}. 
The region $|x_1|\le\eps$, which collapses to the discontinuity surface $x_1=0$ in the limit $\eps\rightarrow0$, has been rescaled into the region $|u|\le1$. In the following we restrict our attention to the dynamics in the region $|u|<1$. 

Some basic properties of the system \eref{blowup} then follow using standard concepts of geometric singular perturbation theory (more detail of which can be found in \cite{f79,j95}, or various recent works in slow-fast dynamics such as \cite{w05} which will be relevant later). 

Expressing \eref{blowup} system with respect to a fast time $\tau=t/\eps$, denoting the corresponding time derivative with a prime, gives the fast subsystem
\begin{equation}\label{blowoute}
(u',x_2',x_3')=\bb{f_1(\eps u,x_2,x_3;\phi_1(u)),\;\eps f_2(\eps u,x_2,x_3;\phi_1(u)),\;\eps f_3(\eps u,x_2,x_3;\phi_1(u))}\;,
\end{equation} 
Setting $\eps=0$ gives the critical fast subsystem (sometimes called the {\it layer} problem)
\begin{equation}\label{blowout}
(u',x_2',x_3')=\bb{f_1(0,x_2,x_3;\phi_1(u)),\;0,\;0}\;.
\end{equation}
This is a one dimensional subsystem, with a set of $(x_2,x_3)$-parameterized equilibria inhabiting a surface
\begin{equation}\label{Mdef}
\op M^S=\cc{(u,x_2,x_3)\in\mathbb R^3:\;f_1(0,x_2,x_3;\phi_1(u))=0\;,|u|<1}\;.
\end{equation}
This is an invariant manifold of \eref{blowout} wherever $\op M^S$ is normally hyperbolic, thus excluding the set of points where hyperbolicity fails, defined as
\begin{equation}\label{Ldef}
\op L=\cc{(u,x_2,x_3)\in\op M^S:\;\frac{\partial\;}{\partial u}f_1(0,x_2,x_3;\phi_1(u))=0}\;.
\end{equation}
%
Returning to \eref{blowup} and setting $\eps=0$ gives the slow critical subsystem (sometimes the called {\it reduced problem})
\begin{equation}\label{blowup0}
(0,\dot x_2,\dot x_3)=\bb{f_1(0,x_2,x_3;\phi_1(u)),\;f_2(0,x_2,x_3;\phi_1(u)),\;f_3(0,x_2,x_3;\phi_1(u))}\;.
\end{equation}
which defines dynamics inside the critical manifold $\op M^S/\op L$, on the original timescale. 

In our present context we will call $\op M^S$ the {\it sliding manifold} $\op M^S$, and refer to the dynamics in \eref{blowup0} on $\op M^S$ as {\it sliding dynamics}, and to its solutions as {\it sliding modes}; this is due to their conjugacy to sliding modes in the piecewise smooth system \eref{fns}, as proven in \cite{t07,j15douglas}. In \cite{t07} it is shown that the slow dynamics of \eref{blowup} on $\op M^S/\op L$, is conjugate to the Filippov sliding dynamics of the piecewise smooth system \eref{ffil}, and in \cite{j15douglas} this result is extended to include the generalization \eref{fnon}.




We now apply these ideas to the normal form of the two-fold singularity. The final step in our analysis will be to transform \eref{blowup} into the normal form of a folded singularity, but we shall find that this is only possible if we regularize \eref{fnon} with ${\bf g}\neq0$.

\section{The unperturbed system}\label{sec:2fdummy}

In this section we show that the Sotomayor-Teixeira regularization of the two-fold singularity has a sliding manifold $\op M^S$ as defined in \sref{sec:blowup}, but that the non-hyperbolic line $\op L$ lies along the fast direction (the $u$-axis), and hence in a degenerate position with respect to the flow. 
We show that this degeneracy is broken by perturbations of the form \eref{fnon} in \sref{sec:2fpert}. 

We first express the two-fold normal form as a convex combination by substituting \eref{2f} into \eref{ffil}, giving
\begin{eqnarray}\label{2fg0}
(\dot x_1,\dot x_2,\dot x_3)&=&\frac{1+\lambda}2(-x_2,a_1,b_1)+\frac{1-\lambda}2(x_3,b_2,a_2)\\
&:=&\bb{f_1(x_1,x_2,x_3;\lambda),\;f_2(x_1,x_2,x_3;\lambda),\;f_3(x_1,x_2,x_3;\lambda)}\;.\nonumber
\end{eqnarray}
We then regularize this by replacing $\lambda\mapsto\phi_\eps(x_1)$ for small $\eps$, constituting a Sotomayor-Teixeira regularization of \eref{fns}, and let $u=x_1/\eps$ to obtain the two timescale system
\begin{eqnarray}\label{2fg0phi}
(\eps\dot u,\dot x_2,\dot x_3)&=&\frac{1+\phi_1(u)}2(-x_2,a_1,b_1)+\frac{1-\phi_1(u)}2(x_3,b_2,a_2)\\
&=&\bb{f_1(\eps u,x_2,x_3;\phi_1(u)),\;f_2(\eps u,x_2,x_3;\phi_1(u)),\;f_3(\eps u,x_2,x_3;\phi_1(u))}\;.\nonumber
\end{eqnarray}
By \eref{Mdef}, the sliding manifold $\op M^S$ 
is given by
\begin{equation}\label{M2f}
\op M^S=\cc{\;(u,x_2,x_3)\in\mathbb R^3:\;\phi_1(u)=\frac{x_3-x_2}{x_3+x_2},\;|u|<1\;}\;.
\end{equation}
The condition $|\phi_1(u)|<1$ by \eref{Peps} implies that $\op M^S$ exists only for $0<x_2x_3$ or for $x_2=x_3=0$. 
%
%
By expressing $\op M^S$ implicitly as the zero contour of the smooth function $(x_3+x_2)\phi_1(u)+(x_2-x_3)$, we see that it is a smooth surface which twists over near $x_2=x_3=0$. It consists of two normally hyperbolic branches, one attractive in $x_2,x_3>0$ since $\partial f_1/\partial u=-(x_3+x_2)\phi_1'(u)/2<0$ (using the fact that $\phi_1'(u)>0$ by \eref{Peps}), and one repulsive in $x_2,x_3<0$ since $\partial f_1/\partial u=-(x_3+x_2)\phi_1'(u)/2>0$. 
The two branches are connected at $x_2=x_3=0$ along the non-hyperbolic set, found from \eref{Ldef} to be
\begin{equation}\label{L}
\op L=\cc{(u,x_2,x_3)\in\op M^S:\;x_2=x_3=0\;}\;.
\end{equation}
This line segment $\op L$, at which the attracting and repelling branches of $\op M^S$ intersect, constitutes the regularization of the two-fold singularity ($x_1=x_2=x_3=0$ in \eref{fns}), now existing for all $|u|<1$ at $x_2=x_3=0$ on $\op M^S$. 
\Fref{fig:2fu} shows an example of the piecewise smooth system (i), its regularization showing $\op M^S$ and $\op L$ in (ii), which is then rotated in to show $\op L$ more clearly (iii). 
\begin{figure}[h!]\centering\includegraphics[width=\textwidth]{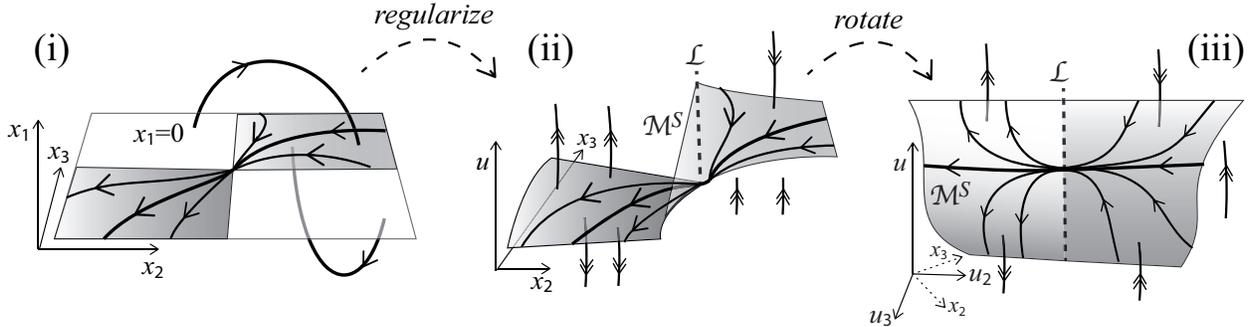}
\vspace{-0.3cm}\caption{\small\sf Regularizing the unperturbed system \eref{2fg0}, for the example of an invisible two-fold. (i) The flow directions outside $x_1=0$  create an attracting sliding region in $x_2,x_3>0$ and repelling sliding region in $x_2,x_3<0$. (ii) The regularization of $x_1=0$, replacing $x_1$ with a fast variable $u=x_1/\eps$, where the sliding regions create a critical manifold $\op M^S$ (shaded), hyperbolic except along the vertical line $\op L$, which aligns with the fast (double arrowed) $u$ dynamics. (iii) The dynamics in the manifold is best viewed along the $u_3$ axis of rotated coordinates $u_2=x_2+x_3$, $u_3=x_2-x_3$. }\label{fig:2fu}\end{figure}

We then have the main result of this section. 
 
\begin{proposition}
In the regularization of the normal form two-fold singularity \eref{2fg0phi}, the non-hyperbolic set $\op L$ of the sliding manifold $\op M^S$ lies everywhere tangent to the coordinate axis of the fast variable. 
\end{proposition}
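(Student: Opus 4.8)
The plan is to compute $\op L$ directly from its defining condition \eref{Ldef}, namely as the subset of $\op M^S$ on which normal hyperbolicity fails, and then to read off the tangent direction of the resulting curve. First I would extract the fast component $f_1$ from the convex combination in \eref{2fg0phi},
\begin{equation}
f_1(\eps u,x_2,x_3;\phi_1(u))=\frac{\bb{x_3-x_2}-\bb{x_2+x_3}\phi_1(u)}2\;,
\end{equation}
so that differentiation in the fast variable gives $\frac{\partial f_1}{\partial u}=-\frac12\bb{x_2+x_3}\phi_1'(u)$.

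Since \eref{Peps} guarantees $\phi_1'(u)>0$ throughout $|u|<1$, the non-hyperbolicity condition $\frac{\partial f_1}{\partial u}=0$ appearing in \eref{Ldef} is equivalent to $x_2+x_3=0$. The next step is to intersect this with membership of $\op M^S$. Here I would avoid the explicit quotient form \eref{M2f}, which is indeterminate precisely on the locus of interest, and instead use the smooth implicit description $\bb{x_2+x_3}\phi_1(u)+\bb{x_2-x_3}=0$. Substituting $x_2+x_3=0$ annihilates the first term and leaves $x_2-x_3=0$, so that $x_2=x_3=0$; hence $\op L=\cc{(u,0,0):\;|u|<1}$, in agreement with \eref{L}.

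It then remains only to identify the tangent direction. Parameterizing $\op L$ by the fast variable as $u\mapsto(u,0,0)$, its tangent vector is the constant $(1,0,0)$, which is exactly the coordinate direction of $u$; thus $\op L$ lies everywhere tangent to (indeed along) the fast axis, as claimed. I expect no genuine obstacle in this argument: the only point needing care is the indeterminate $0/0$ form of \eref{M2f} on $\op L$ itself, which is why the implicit form of $\op M^S$ should be used when performing the intersection. Everything else is a short direct calculation.
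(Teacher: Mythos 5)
Your proposal is correct and follows essentially the same route as the paper: compute $\op L$ from \eref{Ldef} as $x_2=x_3=0$ (using $\phi_1'(u)>0$ and the smooth implicit form of $\op M^S$), parameterize it by $u$, and read off the tangent vector $(1,0,0)$ along the fast axis. The only content in the paper's proof that you omit is a supplementary remark, beyond what the proposition claims, that \emph{all} higher derivatives $\partial^r f_1/\partial u^r=-\hf(x_2+x_3)\phi_1^{(r)}(u)$ also vanish on $\op L$, which is what makes the degeneracy one of infinite codimension.
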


\begin{proof}
The non-hyperbolic set $\op L$ forms a line with tangent vector $e_{\op L}=(1,0,0)$ in the space of $(u,x_2,x_3)$, which means it lies everywhere parallel to the fast $u$-coordinate axis of the two timescale system \eref{2fg0phi}. This is related to the fact that all derivatives of $f_1$ with respect to the fast variable $u$ vanish along $\op L$, not only the first derivative $\partial f_1/\partial u=-(x_3+x_2)\phi'(u)=0$ which defines $\op L$ as the set $x_2=x_3=0$ (since $\phi'(u)\neq0$ for $|u|<1$ by \eref{Peps}), but also all higher derivatives $\partial^rf_1/\partial u^r=-(x_3+x_2)\phi^{r}(u)=0$ for any $r>1$. Thus this constitutes an infinite codimension degeneracy. 

\end{proof}

In the literature on smooth two timescale systems, the connection of attracting and repelling branches of a slow invariant manifolds has been well studied, leading to a generic canonical form and requisite non-degeneracy conditions as described in \cite{w05}. In the present notation, the non-degeneracy of $\op M^S$ along $\op L$ requires the conditions
\begin{equation}\label{wechscan}
\begin{array}{ccccccc}
f_1=0\;,&\quad&\frac{\partial f_1}{\partial u}=0\;,&\quad&
\frac{\partial f_1\;\;\;\;}{\partial x_{2,3}}\neq0\;,&\quad&\frac{\partial^2 f_1}{\partial u^2}\neq0\;,\end{array}
\end{equation}
the first three of which are satisfied on $\op L$ as given by \eref{L}, while the fourth is violated everywhere on $\op L$. Therefore the degeneracy of $\op L$ prevents us relating \eref{2fg0phi} to the canonical form of such singularities in slow-fast systems.

In light of this problem, the papers \cite{t07,tls08,hk15} take a different approach to handling the system \eref{2fg0phi}. While the degeneracy above is not remarked on explicitly, the difficulties that arise from it are, and are tackled by re-scaling the local variables to prove that canards persist for perturbations within the Sotomayor-Teixeira regularization. Here we will instead permit perturbations that constitute a more general regularization, and in doing so we are able to obtain the canonical form satisfying \eref{wechscan}.


\section{The perturbed system}\label{sec:2fpert}

We will now show that a certain perturbation breaks the degeneracy of the system in the previous section. 

To achieve this, first observe that adding constant terms or functions of the coordinates $(x_1,x_2,x_3)$ to \eref{2fg0phi} would only move the set $\op L$ in the $(x_2,x_3)$ plane, not remove its degeneracy, easily seen since the derivatives $$\frac{\partial^r}{\partial u^r} f_1(0,x_2,x_3,\phi_1(u))=\mbox{$\frac{f_1(0,x_2,x_3,+1)-f_1(0,x_2,x_3,-1)}2$}\phi^{r}(u)\qquad\mbox{for}\quad r>0$$ would still vanish on $\op L$, where $f_1(0,x_2,x_3,+1)-f_1(0,x_2,x_3,-1)=0$. 
The only recourse to break the degeneracy, specifically to give $\frac{\partial^2 f_1}{\partial u^2}\neq0$, is therefore to add terms nonlinear in $\phi_1$ to \eref{2fg0phi}, and hence terms nonlinear in $\lambda$ to \eref{2fg0}. Anything we add to the function $f_1$ in \eref{2fg0} must still give \eref{2f}, so it must vanish outside the switching surface $x_1=0$, i.e. be a perturbation in the form \eref{fnon}. We shall show that perturbing $\dot x_1$ with a small term proportional to $\lambda^2-1$ is sufficient for structural stability. Perturbing $\dot x_2$ or $\dot x_3$ is neither necessary nor sufficient, therefore we leave them unaltered. 
%

The perturbed system we consider, applying \eref{fnon} to the normal form \eref{2f} with ${\bf g}=(\alpha,0,0)$, is
\begin{eqnarray}\label{2Fp}
\bb{\dot x_1,\dot x_2,\dot x_3}&=&\frac{1+\lambda}2\bb{-x_2,a_1,b_1}+\frac{1-\lambda}2\bb{x_3,b_2,a_2}+(1-\lambda^2)(\alpha,0,0)\\
&:=&\bb{f_1(x_1,x_2,x_3;\lambda),f_2(x_1,x_2,x_3;\lambda),f_3(x_1,x_2,x_3;\lambda)}\;,\nonumber
\end{eqnarray}
where $\alpha$ is a constant. 
The regularization, replacing $\lambda\mapsto\phi_\eps(x_1)$ for small $\eps$ and letting $u=x_1/\eps$, becomes
\begin{eqnarray}\label{2foldblow}
(\eps\dot u,\dot x_2,\dot x_3)&=&\frac{1+\phi_1(u)}2\bb{-x_2,a_1,b_1}+\frac{1-\phi_1(u)}2\bb{x_3,b_2,a_2}+\alpha(1-\phi_1^2(u))\bb{1,0,0}\\
&:=&\bb{f_1(x_1,x_2,x_3;\lambda),\;f_2(x_1,x_2,x_3;\lambda),\;f_3(x_1,x_2,x_3;\lambda)}\;,\nonumber
\end{eqnarray}
which is an $\alpha$-perturbation of \eref{2fg0phi}. Our main result is then:

\begin{proposition}\label{thm:non}
The regularization \eref{2foldblow} of the normal form two-fold singularity \eref{2f}, using \eref{fnon} with ${\bf g}=(\alpha,0,0)$, can be transformed into the canonical form for the folded singularity \cite{w05}, namely
\begin{eqnarray*}
\eps\dot{ x_1}&=& x_2+ x_1^2+\ord{\eps x_1,\eps x_3, x_1 x_3}\\
\dot{ x}_2&=& p x_3+ q x_1+\ord{ x_3^2, x_1 x_3}\\
\dot{ x}_3&=& r+\ord{ x_3, x_1}
\end{eqnarray*}
provided $\alpha\neq0$ for small $\eps>0$, where $p,q,r,$ are real constants, and provided 
the conditions $\hf(b_1-b_2)\le1=a_1=-a_2$ or $\hf(b_1-b_2)\ge-1=a_1=-a_2$ do not hold. 
\end{proposition}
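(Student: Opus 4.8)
The plan is to transform the regularized system \eref{2foldblow} into the canonical folded-singularity form by a sequence of explicit coordinate and time rescalings, and then to read off the non-degeneracy condition \eref{wechscan} (specifically $\partial^2 f_1/\partial u^2\neq0$) from the coefficient of the quadratic term, showing it is exactly the condition $\alpha\neq0$ modulo the excluded cases. First I would expand $f_1$ from \eref{2foldblow} explicitly: writing $f_1=\hf(1+\phi_1)(-x_2)+\hf(1-\phi_1)x_3+\alpha(1-\phi_1^2)$, I would compute $\partial^2 f_1/\partial u^2$ along $\op L$. On $\op L$ we have $x_2=x_3=0$, so the linear-in-$\phi_1$ part contributes nothing to the second $u$-derivative at the singularity, and the only surviving quadratic term comes from $-\alpha\phi_1^2$, giving $\partial^2 f_1/\partial u^2\big|_{\op L}=-2\alpha(\phi_1')^2-2\alpha\phi_1\phi_1''$, whose leading piece is nonzero precisely when $\alpha\neq0$. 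This is the mechanism by which the $\bf g$ term repairs the infinite-codimension degeneracy of the previous section.

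Next I would construct the change of variables bringing \eref{2foldblow} to the stated normal form. The natural route is to first pass to the rotated slow coordinates $u_2=x_2+x_3$, $u_3=x_2-x_3$ already introduced in the caption of \fref{fig:2fu}, since $\op M^S$ is defined by $\phi_1(u)=-u_3/u_2$ and the fold locus $\op L$ is $u_2=u_3=0$; in these coordinates the quadratic folding in the fast direction becomes manifest. I would then rescale $u$, $u_2$, $u_3$ and time $t$ by appropriate powers of $\eps$ and of the nonzero constants $\alpha$, $\phi_1'(0)$, $\hf(a_1-a_2)$, $\hf(a_1+a_2)$ etc., chosen so that the coefficient of the fast quadratic term is normalized to $+1$ (matching $x_2+x_1^2$ in the target), the leading fast-equation term is linear in the new slow variable, and the slow equations take the form $\dot x_2=px_3+qx_1+\cdots$, $\dot x_3=r+\cdots$. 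The constants $p,q,r$ are then determined algebraically in terms of $a_i,b_i,\alpha$; their precise values are not needed for the statement, only that the transformation is well-defined, i.e. that the rescaling factors are finite and nonzero.

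The main obstacle, and the origin of the excluded conditions, is guaranteeing the non-degeneracy requirement $\partial f_1/\partial x_{2,3}\neq0$ from \eref{wechscan} at the singularity, together with ensuring $r\neq0$ so that the reduced flow actually crosses the fold rather than stagnating on it. I would evaluate $\partial f_1/\partial x_2$ and $\partial f_1/\partial x_3$ on $\op L$ and check they do not both vanish in the limit, and separately examine the $\dot x_3=r+\cdots$ equation: $r$ is built from $\hf(a_1+a_2)$ and $\hf(b_1-b_2)$ and the fold-crossing direction. The degenerate configurations occur exactly when the mixed case $a_1=-a_2=\pm1$ combines with $\hf(b_1-b_2)$ lying in the interval that forces the slow flow tangent to $\op L$ (equivalently $r=0$ or a vanishing transversality coefficient), which is precisely $\hf(b_1-b_2)\le1=a_1=-a_2$ or $\hf(b_1-b_2)\ge-1=a_1=-a_2$. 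I would therefore establish the transformation generically, then carve out these cases by showing that in them the required nonzero constant ($r$, or the transversality of the reduced vector field to $\op L$) degenerates, so that no coordinate change can produce the stated canonical form. The hard part will be bookkeeping the rescaling exponents consistently across all three flavours of two-fold ($a_1=a_2=\pm1$ and the mixed case) so that a single normal form is obtained with the correct sign conventions, since the visible, invisible, and mixed subcases differ in how the fast and slow directions orient relative to $\op L$.
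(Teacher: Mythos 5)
There is a genuine gap, in two places. First, your construction works with the \emph{unperturbed} geometry where the perturbed geometry is required. For $\alpha\neq0$ the non-hyperbolic set is not $x_2=x_3=0$: applying \eref{Ldef} to \eref{2foldblow} gives the curve \eref{Lpert}, parameterized as $(u,\alpha(\phi_1(u)-1)^2,-\alpha(\phi_1(u)+1)^2)$ in \eref{Lparam}, which is transverse to the fast $u$-axis precisely because $\alpha\neq0$ (Lemma \ref{thm:l1} of the paper; this, not the sign of $\partial^2f_1/\partial u^2$ at the origin, is the working sense in which $\alpha$ repairs the degeneracy). Your evaluation of $\partial^2 f_1/\partial u^2$ at $x_2=x_3=0$ is therefore made on a set that is not $\op L$; on the true $\op L$ the computation is cleaner, since $\partial f_1/\partial u=0$ there kills the $\phi_1''$ term and leaves exactly $-2\alpha(\phi_1')^2$. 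More seriously, your rotated coordinates $u_2=x_2+x_3$, $u_3=x_2-x_3$ with ``$\op M^S$ given by $\phi_1=-u_3/u_2$ and fold locus $u_2=u_3=0$'' describe the degenerate $\alpha=0$ system of \sref{sec:2fdummy}; anchoring the normal-form construction at $u_2=u_3=0$ centres it at a point that is not even on the perturbed $\op L$. The construction must instead be centred at the \emph{folded singularity}, the isolated point of $\op L$ solving \eref{fsing}, i.e.\ where $(f_2,f_3)\cdot\partial f_1/\partial(x_2,x_3)=0$; locating it (Lemma \ref{thm:l2}) and translating to it (the $y$- and $z$-coordinates of Lemma \ref{thm:l3}) are unavoidable steps your proposal skips.

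Second, your mechanism for the excluded cases is wrong. You attribute them to failure of $\partial f_1/\partial x_{2,3}\neq0$ or of $r\neq0$. In fact $\partial f_1/\partial(x_2,x_3)=\bb{-\hf(1+\phi_1),\hf(1-\phi_1)}$ can never vanish, and $r=f_{3s}$ is generically nonzero and is unrelated to the conditions on $b_1-b_2$. What actually fails is the \emph{existence} of the folded singularity: solving \eref{fsing} gives the quadratic \eref{fsingu} for $\phi_1(u_s)$, and in the mixed cases $a_1=-a_2=\pm1$ its roots satisfy $|\phi_1(u_s)|<1$ only when $\pm(b_1-b_2)>2$. When $\hf(b_1-b_2)\le1=a_1=-a_2$ (or the mirror condition) there is no admissible root at all --- the two-fold is of focal type, with no canards, and there is no point about which to build the canonical form. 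So the strategy ``establish the transformation generically, then carve out the cases where $r$ or a transversality coefficient degenerates'' cannot recover the stated excluded set; the case analysis on the roots of \eref{fsingu} is where those conditions come from, and it has to be done before, not after, the coordinate transformation.
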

\noindent It turns out that the case excluded by the conditions $\pm\hf(b_1-b_2)\le1=\pm a_1=\mp a_2$ is that in which there are no canards or faux-canards, i.e. no orbits of the sliding flow passing through the singularity. 

We shall prove the proposition by way of three lemmas, establishing first the non-degeneracy of $\op L$, second locating a singularity along $\op L$, and finally using these to derive new local coordinates in which $\op M^S$ becomes a simple parabolic surface.
 
The sliding manifold, found by applying \eref{Mdef} to \eref{2foldblow}, is now the set
\begin{equation}\label{2foldMSp}
\op M^S=\cc{(u,x_2,x_3)\in\mathbb R^3\;:\;|u|<1,\;\frac{1-\phi_1(u)}2x_3-\frac{1+\phi_1(u)}2x_2+\alpha(1-\phi_1^2(u))=0}\;,
\end{equation}
which is normally hyperbolic except on the set given by applying \eref{Ldef} to \eref{2foldblow}, 
\begin{equation}\label{Lpert}
\op L=\cc{(u,x_2,x_3):\;\phi_1(u)=2\frac{2\alpha+x_3-x_2}{x_3+x_2}=-\frac{x_3+x_2}{4\alpha}}\;.
\end{equation}
Solving the conditions in \eref{Lpert} we can express $\op L$ in paramteric form as
\begin{equation}\label{Lparam}
(u,x_2,x_3)=\op L(y_1):=\bb{u,\alpha(\phi_1(u)-1)^2,\; -\alpha(\phi_1(u)+1)^2}\;.
\end{equation}
This gives us the first result as follows. 

\begin{lemma}\label{thm:l1}
The non-hyperbolic set $\op L$ is transverse to the fast direction of \eref{2foldblow}. 
\end{lemma}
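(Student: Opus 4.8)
The plan is to work directly from the parametric description \eref{Lparam} of $\op L$, treating $u$ as the curve parameter, and to compare the tangent vector of $\op L$ with the fast coordinate direction $e_{\op L}=(1,0,0)$. In the sense relevant here (contrasting with the previous proposition, where $\op L$ was aligned with the fast fibre), transversality to the fast direction means that the tangent to $\op L$ is not proportional to $e_{\op L}$, equivalently that the projection of this tangent onto the slow $(x_2,x_3)$-plane does not vanish. So the whole argument reduces to computing one derivative and checking that two scalar quantities are never simultaneously zero.

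First I would differentiate \eref{Lparam} with respect to $u$ to obtain
\[
\frac{d\op L}{du}=\bb{\;1,\;2\alpha(\phi_1(u)-1)\phi_1'(u),\;-2\alpha(\phi_1(u)+1)\phi_1'(u)\;}\;,
\]
so the slow components of the tangent are $T_2=2\alpha(\phi_1(u)-1)\phi_1'(u)$ and $T_3=-2\alpha(\phi_1(u)+1)\phi_1'(u)$. This already contrasts sharply with the unperturbed case, where the analogous tangent was exactly $(1,0,0)$ and $\op L$ lay along the fast fibre.

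The key step is then to show that the slow part $(T_2,T_3)$ never vanishes for $|u|<1$. Here the hypotheses do the work: since $\alpha\neq0$ and $\phi_1'(u)>0$ for $|u|<1$ by \eref{Peps}, the common factor $2\alpha\phi_1'(u)$ is nonzero, so $T_2=0$ would force $\phi_1(u)=1$ while $T_3=0$ would force $\phi_1(u)=-1$. These are mutually exclusive, so at least one of $T_2,T_3$ is always nonzero; in fact the strict bound $-1<\phi_1(u)<1$ on $|u|<1$, again from \eref{Peps}, makes both nonzero. Either way the tangent to $\op L$ is nowhere proportional to $e_{\op L}$, which is the asserted transversality.

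The main obstacle, such as it is, lies entirely in this last verification, namely confirming that the two scalar components cannot both vanish; this is precisely what separates the perturbed ($\alpha\neq0$) case from the degenerate unperturbed one treated in the previous proposition. No blow-up or further rescaling is needed once the parametric form \eref{Lparam} is in hand, and the computation makes transparent that the role of the nonlinear `hidden' term $(1-\lambda^2)(\alpha,0,0)$ is exactly to tilt $\op L$ off the fast axis through the factor $\alpha$.
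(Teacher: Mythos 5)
Your proof is correct and follows essentially the same route as the paper: differentiate the parametrization \eref{Lparam} in $u$ to get the tangent vector $\bb{1,\;2\alpha(\phi_1(u)-1)\phi_1'(u),\;-2\alpha(\phi_1(u)+1)\phi_1'(u)}$ and conclude transversality from $\alpha\neq0$. The only difference is that you spell out the verification the paper leaves implicit --- that $\phi_1'(u)>0$ and the mutual exclusivity of $\phi_1(u)=\pm1$ prevent the slow components from vanishing simultaneously --- which is a welcome bit of added rigor rather than a departure.
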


\begin{proof}
By differentiating \eref{Lparam} with respect to $u$, we find that the curve $\op L$ has tangent vector
$e_{\op L}
=\bb{1\;,\;2\alpha(\phi_1(u)-1)\phi_1'(u)\;,\;-2\alpha(\phi_1(u)+1)\phi_1'(u)}$, which for all $|\phi_1(u)|\le1$ is transverse to the coordinate axes provided $\alpha\neq0$. 
\end{proof}


While the non-hyperbolic curve $\op L$ is now in a general position with respect to the fast variable, generically there may exist a new singularity along $\op L$, where the flow's projection along the $\lambda$-direction onto the nullcline $f_1=0$ is indeterminate. This is the so-called {\it folded singularity}, defined in the following lemma. 

\begin{lemma}\label{thm:l2}
For the values of the constants $a_1,a_2,b_1,b_2,$ given in Proposition \ref{thm:non}, there exists an isolated singularity of the flow along the non-hyperbolic set $\op L$, where the projection of the slow flow onto $\op M^S$ lies tangent to $\op L$. 
\end{lemma}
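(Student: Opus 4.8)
The plan is to build the reduced (slow) flow on the critical manifold $\op M^S$ of \eref{2foldMSp}, translate ``folded singularity'' into the tangency condition of that flow with $\op L$, collapse the condition to a single scalar equation in the fast coordinate, and then show this equation has a root in the admissible strip $|u|<1$ exactly under the hypotheses of Proposition~\ref{thm:non}.

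First I would extract the reduced flow. The slow components of \eref{2foldblow} are $\dot x_2=f_2$ and $\dot x_3=f_3$, and these depend only on $\phi_1(u)$. Differentiating the constraint $f_1=0$ that defines $\op M^S$ gives $\frac{\partial f_1}{\partial u}\dot u=-\frac{\partial f_1}{\partial x_2}f_2-\frac{\partial f_1}{\partial x_3}f_3$. Writing $A=\hf\bb{1+\phi_1(u)}$ and $B=\hf\bb{1-\phi_1(u)}$, so that $f_2=a_1A+b_2B$, $f_3=b_1A+a_2B$, $\partial f_1/\partial x_2=-A$ and $\partial f_1/\partial x_3=B$, this reads $\frac{\partial f_1}{\partial u}\dot u=Af_2-Bf_3$. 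On $\op L$ the factor $\partial f_1/\partial u$ vanishes, so $\dot u$ blows up (the generic fold, where the reduced flow jumps off $\op M^S$) unless the numerator $Af_2-Bf_3$ also vanishes; that is the folded singularity. Equivalently, there the projected drift $(f_2,f_3)$ is collinear with the tangent of $\op L$, which by differentiating \eref{Lparam} is proportional to $\bb{\phi_1-1,\,-(\phi_1+1)}$, and the collinearity determinant again returns $Af_2-Bf_3=0$.

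Next I would reduce $Af_2-Bf_3=0$ to a scalar equation in $\phi:=\phi_1(u)$. Clearing the common factor, it is equivalent to $Q(\phi)=0$ where $Q(\phi)=(a_1-a_2+c)\phi^2+2(a_1+a_2)\phi+(a_1-a_2-c)$ and $c=b_1-b_2$. The decisive observation is the pair of endpoint values $Q(1)=4a_1$ and $Q(-1)=-4a_2$. Since $\phi_1$ is a strictly increasing bijection of $(-1,1)$ onto $(-1,1)$ by \eref{Peps}, a folded singularity with $|u|<1$ corresponds exactly to a root $\phi_*\in(-1,1)$ of $Q$, each root giving a unique point $\op L(u_*)$ with $u_*=\phi_1^{-1}(\phi_*)$; as $Q$ has at most two roots, any such singularity is automatically isolated. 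The root count then splits on $a_1a_2$. In the visible and invisible cases $a_1a_2=1$, so $Q(1)Q(-1)=-16a_1a_2=-16<0$ and the intermediate value theorem yields a root in $(-1,1)$ unconditionally. In the mixed case $a_1a_2=-1$ the endpoints share a sign and the linear term drops out, leaving $\phi^2=\tfrac{c-2}{c+2}$ when $a_1=-a_2=1$, and $\phi^2=\tfrac{c+2}{c-2}$ when $a_1=-a_2=-1$; I would verify the right-hand side lies in $[0,1)$ — hence a root $\phi_*\in(-1,1)$ exists — precisely when $\hf(b_1-b_2)>1$, respectively $\hf(b_1-b_2)<-1$, which is exactly the complement of the conditions excluded in Proposition~\ref{thm:non}.

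The main obstacle I anticipate is this mixed-case localization: pinning down when the relevant root of $Q$ lands strictly inside $(-1,1)$, and treating the boundary values $\hf(b_1-b_2)=\pm1$ with enough care. At those boundaries the two mixed-case folded singularities coalesce into the degenerate double root $\phi_*=0$ and then leave the strip, so they must be omitted; the delicate point is to make the strict inequalities of the proposition align exactly with the open-interval membership of $\phi_*$, and to confirm the leading coefficient $a_1-a_2+c$ does not vanish in the regimes where a root is claimed (it equals $2+c>4$ or $c-2<-4$ there), so that $Q$ stays genuinely quadratic and the singularity stays isolated.
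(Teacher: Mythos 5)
Your proposal is correct, and its skeleton is the same as the paper's proof of Lemma~\ref{thm:l2}: differentiate the constraint $f_1=0$ to obtain $\dot u$ on $\op M^S$, identify the folded singularity with the simultaneous vanishing of $\partial f_1/\partial u$ and of the numerator (your $Af_2-Bf_3$ is the paper's condition \eref{f23s} up to sign), and reduce everything to locating roots of a quadratic in $\phi_1(u)$ inside $(-1,1)$; your $Q(\phi)=(a_1-a_2+c)\phi^2+2(a_1+a_2)\phi+(a_1-a_2-c)$ is exactly the quadratic whose roots the paper writes out in \eref{fsingu}. Where you genuinely differ is the root-location step, and your version has real merit. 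The paper applies the explicit quadratic formula and monotonicity of $\phi_1$ in a four-way case analysis, whereas you evaluate $Q(1)=4a_1$ and $Q(-1)=-4a_2$ and invoke the intermediate value theorem, which settles the visible and invisible cases ($a_1a_2=1$) in one stroke with no formulas, and --- a concrete advantage --- remains valid when $b_1=b_2$, where the paper's closed-form expression \eref{fsingu} is singular (it divides by $b_1-b_2$, the quadratic degenerating there to a linear equation with root $\phi=0$). Your mixed-case computation coincides with the paper's, and your collinearity remark (drift $(f_2,f_3)$ parallel to the $(x_2,x_3)$-projection of $e_{\op L}$) makes explicit the ``tangent to $\op L$'' phrasing of the lemma, which the paper leaves implicit. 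Two minor inaccuracies, neither fatal: your ``precisely when $\hf(b_1-b_2)>1$'' (resp.\ $<-1$) is not exact at the boundary $\hf(b_1-b_2)=\pm1$, where $\phi_*=0$ is a degenerate double root lying \emph{inside} the strip rather than outside it; but Proposition~\ref{thm:non} excludes those boundary values, so the only implication the lemma requires --- strict inequality implies two roots in $(-1,1)$ --- is the one you establish, and you flag this subtlety yourself. Also, as the boundary is crossed the two roots do not ``leave the strip'' but leave the real axis by becoming complex; this wording slip does not affect the argument, and your isolation claim is sound since $Q\not\equiv0$ for all admissible $a_i,b_i$.
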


\begin{proof}
Let us consider the slow critical subsystem, obtained by letting $\eps=0$ in \eref{2foldblow}, 
\begin{eqnarray}\label{2fslow}
(0,\dot x_2,\dot x_3)&=&\frac{1+\phi_1(u)}2\bb{-x_2,a_1,b_1}+\frac{1-\phi_1(u)}2\bb{x_3,b_2,a_2}+\alpha(1-\phi_1^2(u_s))\bb{1,0,0}\\\nonumber
&=&\bb{f_1(0,x_2,x_3;\phi_1(u)),f_2(0,x_2,x_3;\phi_1(u)),f_3(0,x_2,x_3;\phi_1(u))}\;.
\end{eqnarray}
Since $\op M^S$ is the surface where $f_1=0$, a solution of \eref{2fslow} that remains on $\op M^S$ for an interval of time satisfies $\dot f_1=0$. We can find $\dot u$ on $\op M^S$ using the chain rule, writing 
$\dot f_1=\bb{\dot u,\dot x_2,\dot x_3}\cdot\bb{\partial/\partial u,\partial/\partial x_2,\partial/\partial x_3}f_1=\dot u\frac{\partial f_1}{\partial u}+(f_2,f_3)\cdot\frac{\partial f_1\quad}{\partial(x_2,x_3)}=0$, which rearranges to $\dot u=-(f_2,f_3)\cdot \frac{\partial f_1\quad}{\partial(x_2,x_3)}/\bb{\frac{\partial f_1}{\partial u}}$
. Thus $\dot u$ is indeterminate on $\op M^S$ at points where the numerator and denominator of this vanish, or in full, where
\begin{equation}\label{fsing}
0=f_1=\frac{\partial f_1}{\partial u}=(f_2,f_3)\cdot \frac{\partial f_1\quad}{\partial(x_2,x_3)}\;.
\end{equation}
These three conditions define an isolated singularity on $\op L\subset\op M^S$. 
Denoting the value of $f_i$ at the singularity as $f_{is}$, and solving \eref{fsing}, which constitutes finding a point on $\op L$ given by \eref{Lpert} such that
\begin{eqnarray}\label{f23s}
0&=&(f_{2s},f_{3s})\cdot\bb{-\frac{1+\phi_1(u_s)}2,\frac{1-\phi_1(u_s)}2}\nonumber\\
&=&\mbox{$\bb{\frac{a_1+b_2}2+\frac{a_1-b_2}2\phi_1(u_s)\;,\;\frac{b_1+a_2}2+\frac{b_1-a_2}2\phi_1(u_s)}\cdot\bb{-\frac{1+\phi_1(u_s)}2,\frac{1-\phi_1(u_s)}2}$}\;,
\end{eqnarray}
we find that the folded singularity lies at $(u,x_2,x_3)=(u_s,x_{2s},x_{3s})$ where
\begin{equation}\label{fsingu}
\phi_1(u_s)=
\frac{-\frac{a_1+a_2}{b_1-b_2}\pm\sqrt{1+\frac{4a_1a_2}{(b_1-b_2)^2}}}{1+\frac{a_1-a_2}{b_1-b_2}}\;,
\quad x_{2s}=\alpha(\phi_1(u_s)-1)^2\;,\quad x_{3s}=-\alpha(\phi_1(u_s)+1)^2\;.
\end{equation}
Noting that $a_1$ and $a_2$ in the normal form just take values $\pm1$, and recalling that $\phi_1(u_s)$ is monotonic on $|u|<1$ by \eref{Peps}, we have:
\begin{itemize}
\item in the case $a_1=a_2=1$, we have $\phi_1(u_s)=\frac{-2}{b_1-b_2}\pm\sqrt{1+\frac{4}{(b_1-b_2)^2}}$, implying that there exists a unique solution $u_s\in(-1,+1)$ 
for any $b_1$ and $b_2$ (the positive root for $b_1>b_2$, the negative root for $b_1<b_2$);
\item in the case $a_1=a_2=-1$, we have $\phi_1(u_s)=\frac{2}{b_1-b_2}\pm\sqrt{1+\frac{4}{(b_1-b_2)^2}}$, implying that there exists a uniquesolution $u_s\in(-1,+1)$
 for any $b_1$ and $b_2$ (the positive root for $b_1<b_2$, the negative root for $b_1>b_2$);
\item in the case $a_1=-a_2=1$, we have $\phi_1(u_s)=\pm\sqrt{\frac{b_1-b_2-2}{b_1-b_2+2}}$, implying that there exist two solutions $u_s\in(-1,+1)$ 
for $b_1-b_2>2$, and no points otherwise. 
\item in the case $a_1=-a_2=-1$, we have $\phi_1(u_s)=\pm\sqrt{\frac{b_1-b_2+2}{b_1-b_2-2}}$, implying that there exist two solution $u_s\in(-1,+1)$ 
for $b_1-b_2<-2$, and no points otherwise. 
\end{itemize}

\end{proof}

This lemma establishes the existence of at least one unique folded singularity on $\op L$ in the cases listed in Proposition \ref{thm:non}. 
In the cases where $u_s$ is unique we proceed directly to the steps that follow below. In the cases where $u_s$ can take two values we can proceed with the following analysis about each value, and will obtain different constants in the final local expression, i.e. a different folded singularity corresponding to each $u_s$. In the cases when $u_s$ does not exist, no equivalence can be formed; these are the cases when the the two-fold's sliding portrait is of focal type (see \cite{jc12}), and there exists no canards since orbits wind around the two-fold but never enter or leave it. So excluding those cases $a_1=-a_2=1$ with $b_1-b_2\le2$ and $a_1=-a_2=-1$ with $b_1-b_2\ge-2$, we proceed with the final step in proving proposition \ref{thm:non}. 

\begin{lemma}\label{thm:l3}
Coordinates can be defined in which the folded singularity of \eref{2foldblow} lies at the origin, and $\op L$ lies along a coordinate axis corresponding to a slow variable. 
\end{lemma}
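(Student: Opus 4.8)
The plan is to build the change of variables in two stages: use the regular structure of $\op L$ from Lemma~\ref{thm:l1} to introduce slow coordinates that straighten $\op L$, and then introduce a single fast coordinate transverse to it. First I would translate so that the folded singularity $(u_s,x_{2s},x_{3s})$ located in Lemma~\ref{thm:l2} (formula \eref{fsingu}) sits at the origin. The conceptual point that makes the whole construction possible is that, although $\op L$ has tangent $e_{\op L}=\bb{1,\,2\alpha(\phi_1-1)\phi_1',\,-2\alpha(\phi_1+1)\phi_1'}$ with a nonzero fast ($u$) component, its projection onto the slow $(x_2,x_3)$-plane is a regular curve precisely because $\alpha\neq0$: the two slow components of $e_{\op L}$ cannot both vanish since $\phi_1$ cannot equal $+1$ and $-1$ at once and $\phi_1'\neq0$ for $|u|<1$. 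This is exactly the transversality of Lemma~\ref{thm:l1}, and it means the motion along $\op L$ can be carried by a genuinely slow coordinate, so that no mixing of timescales is forced upon us.

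The explicit construction is driven by the observation that, writing $s=\phi_1(u)$ (a diffeomorphism on $|u|<1$ by \eref{Peps}), the defining function of $\op M^S$ in \eref{2foldMSp} is quadratic in $s$, and completing the square gives $f_1=-\alpha\bb{s+\tfrac{x_2+x_3}{4\alpha}}^2+\tfrac{\alpha}4 D(x_2,x_3)$ for a smooth slow function $D$. I would then set the new fast coordinate to $X_1=\phi_1(u)+\tfrac{x_2+x_3}{4\alpha}$, which satisfies $\partial X_1/\partial u=\phi_1'\neq0$ (so it is genuinely fast) and vanishes on $\op L$ by \eref{Lpert}; the first slow coordinate to $X_2\propto -D$, a function of $(x_2,x_3)$ alone, which also vanishes on $\op L$ and, with a suitable normalization, makes $\op M^S$ the parabolic surface $X_2+X_1^2=0$; and a second slow coordinate $X_3$, again a function of $(x_2,x_3)$ alone, chosen transverse to the level sets of $X_2$ so as to parametrize the projection of $\op L$, shifted to vanish at the singularity. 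By construction $\op L=\cc{X_1=X_2=0}$ is the $X_3$-axis and the singularity is at the origin, which is precisely what the lemma asserts (and this already sets up the parabolic leading part of the canonical form in Proposition~\ref{thm:non}).

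The verification steps I would then carry out are: (i) that $(u,x_2,x_3)\mapsto(X_1,X_2,X_3)$ is a local diffeomorphism near the singularity, its Jacobian being block triangular with fast block $\phi_1'\neq0$ and slow block nonsingular because $\nabla D\neq0$ there (equivalently because $\phi_1(u_s)\neq-1$, which holds since $u_s\in(-1,1)$); (ii) that $X_2$ and $X_3$ stay slow, with $\dot X_2,\dot X_3=\ord{1}$, since they depend only on the slow variables, while $\eps\dot X_1=\phi_1' f_1+\ord{\eps}=-\alpha\phi_1'\bb{X_1^2-\tfrac{D}4}+\ord{\eps}$ keeps the required fast form; and (iii) that the quadratic fold is nondegenerate, which is where $\alpha\neq0$ re-enters through $\partial^2 f_1/\partial u^2=-2\alpha(\phi_1')^2\neq0$ on $\op L$.

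The main obstacle I anticipate is reconciling the alignment of $\op L$ with a slow axis against the slow--fast splitting: because $e_{\op L}$ points partly along the fast direction, one must check that the coordinate change does not smuggle a factor $1/\eps$ into the slow equations nor spoil the fast character of $\eps\dot X_1$. The resolution is to keep $X_2,X_3$ independent of $u$ and to let only $X_1$ absorb the $u$-dependence; the nonconstant prefactor $\phi_1'(u)$ and the value $\phi_1(u_s)$ appearing in $\eps\dot X_1$ are then removed by a constant rescaling of $X_1$ and of the fast time, and the remaining $\ord{\eps}$ and $u$-dependent contributions are absorbed by a near-identity, $\ord{\eps}$ shift of the slow origin, leaving only terms of the types $\ord{\eps x_1,\eps x_3,x_1 x_3}$ appearing in the canonical fast equation.
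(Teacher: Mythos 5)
Your construction is correct, and it reaches the lemma by a genuinely different route than the paper. After the common first step (translating the folded singularity \eref{fsingu} to the origin), the paper rectifies $\op L$ by writing it explicitly as a graph over the slow variable $y_3$, namely $(y_1,y_2)=\bb{y_{1L}(y_3),y_{2L}(y_3)}$ with square-root formulas obtained from \eref{Lparam}, and then subtracting: $z_1=y_1-y_{1L}(y_3)$, $z_2=y_2-y_{2L}(y_3)$, $z_3=y_3$; the vector field is pushed through this map using $y_{1L}'$, $y_{2L}'$, and $\op M^S$ is only approximately parabolic in the resulting coordinates. You instead exploit the exact quadratic dependence of $f_1$ on $s=\phi_1(u)$: completing the square gives $f_1=-\alpha\bb{X_1^2+X_2}$ with $X_1=s+\frac{x_2+x_3}{4\alpha}$ fast and $X_2=-D/4$ slow, so that $\op M^S$ is exactly the parabola $X_2+X_1^2=0$ and $\op L$ is exactly $\cc{X_1=X_2=0}$, i.e.\ the $X_3$-axis. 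Both routes keep the two slow coordinates functions of $(x_2,x_3)$ alone --- the essential point, which you correctly identify, for not injecting $1/\eps$ into the slow equations --- and both finish with a near-identity $\ord{\eps}$ shift of the slow origin plus constant rescalings. Your route buys cleaner geometry: no square roots, the role of $\alpha\neq0$ is immediate ($\partial^2 f_1/\partial u^2=-2\alpha\phi_1'(u)^2$ on $\op L$), and the defining condition \eref{fsing} of the folded singularity appears precisely as the vanishing of the constant term of $\dot X_2$, since on $\op L$ one has $\nabla X_2=-\frac{1}{\alpha}\,\partial f_1/\partial(x_2,x_3)$. The paper's route buys the explicit constants: pushing the field through $y_{1L}'$, $y_{2L}'$ is what produces the formulas \eref{wp} for $p,q,r$ needed in Proposition \ref{thm:non} and in the subsequent folded-saddle/node/focus classification; in your coordinates you would still need to compute $\dot X_2$, $\dot X_3$ to that level of detail to complete the proposition rather than just the lemma.

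One claim should be restated more carefully: the $u$-dependence of the prefactor in $\eps\dot X_1=-\alpha\phi_1'(u)\bb{X_1^2+X_2}+\ord{\eps}$ is neither removed by a constant rescaling nor absorbed by the $\ord{\eps}$ shift of the slow origin. Writing $\phi_1'(u)=\phi_1'(u_s)+\ord{X_1,X_2,X_3}$, it contributes cubic-order terms such as $X_1^3$, $X_1X_2$, $X_2X_3$; these must simply be consigned to the higher-order remainder, which is legitimate (they are higher order under the blow-up weights of the folded singularity) and is exactly what the paper does when its intermediate remainder $\ord{\eps z_3,\eps z_1,z_2z_3,z_2z_1,z_1^3}$ is compressed to $\ord{\eps x_1,\eps x_3,x_1x_3}$ in the final form \eref{wechs}. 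So this is a presentational fix, not a gap in your argument.
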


\begin{proof}
Taking a valid solution of $u_s$ from \eref{fsingu} for $|u|<1$, a translation puts the singularity at the origin of the new coordinates
\begin{equation}
y_1=\phi_1(u)-\phi_1(u_s)\;,\qquad y_2=x_2-x_{2s}\;,\qquad y_3=x_3-x_{3s}\;.
\end{equation}
Then $f_1$ becomes
\begin{equation}\label{fz}
f_1=-\frac{1+\phi_1(u_s)}2y_2   +\frac{1-\phi_1(u_s)}2y_3    -\bb{\frac{y_3+y_2}2+\alpha y_1}y_1\;,
\end{equation}
found by using \eref{f23s}-\eref{fsingu} to ensure that terms involving $x_{2s}$ and $x_{3s}$ vanish. 
To find coordinates in which $\op L$ lies along a coordinate axis, from \eref{Lparam} we can obtain the $y_1$-parameterized expression for $\op L$, 
\begin{equation*}
(y_1,y_2,y_3)=\op L(y_1):=\bb{y_1,-\alpha y_1(2 -2\phi_1(u_s) - y_1),\; -\alpha y_1(2+2\phi_1(u_s) + y_1)}\;,
\end{equation*}
and re-arrange this to take $y_3$ as a parameter, expressing $\op L$ as $(y_1,y_2)=\bb{y_{1L}(y_3),y_{2L}(y_3)}$, where
\begin{equation}
\bb{\begin{array}{c}y_{1L}(y_3)\\y_{2L}(y_3)
\end{array}}:=\bb{ \begin{array}{c}  -1-\phi_1(u_s)+\sqrt{(1+\phi_1(u_s))^2-y_3/\alpha} \\ -y_3-4\alpha (-1-\phi_1(u_s)+\sqrt{(1+\phi_1(u_s))^2-y_3/\alpha})
\end{array} }\;.
\end{equation}
The derivatives of these functions are needed to evaluate the vector field components below, these are
\begin{equation}
y_{1L}'(y_3)
=\frac{-1/2\alpha}{1+\phi_1(u_s)+y_{1L}(y_3)}\;,\quad 
y_{2L}'(y_3)
=\frac{1-\phi_1(u_s)-y_{1L}(y_3)}{1+\phi_1(u_s)+y_{1L}(y_3)}\;.
\end{equation}
We can then rectify $\op L$ to lie along some $z_3$ axis by defining new coordinates
\begin{equation}
z_1=y_1-y_{1L}(y_3)\;,\qquad z_2=y_2-y_{2L}(y_3)\;,\qquad z_3=y_3\;.
\end{equation}
The original vector field components can then be written as
\begin{equation}\label{ove}
\begin{array}{rcl}
f_1&=&-\frac{1+\phi_1(u_s)+y_{1L}}2z_2  - \alpha z_1^2-z_2z_1/2\;,\\
f_2&=&f_{2s}+\bb{z_1+y_{1L}(z_3)}\frac{\partial f_{2s}}{\partial\phi_1}=f_{2s}+\ord{z_1,z_3}\;,\\
f_3&=&f_{3s}+\bb{z_1+y_{1L}(z_3)}\frac{\partial f_{3s}}{\partial\phi_1}=f_{3s}+\ord{z_1,z_3}\;.
\end{array}
\end{equation}
With a little algebra we find that
\begin{eqnarray*}
\eps\dot z_1&=&\eps\dot y_1-\eps\dot y_3y_{1L}'(y_3)\\
&=&\frac{1+\phi_1(u_s)}2\bb{\frac{\eps f_{3s}}{\alpha(1+\phi_1(u_s))^2}-z_2\phi_1'(u_s)}-\alpha z_1^2\phi_1'(u_s)+\ord{\eps z_3,\eps z_1,z_2z_3,z_2z_1,z_1^3}
\end{eqnarray*}
%
A small shift $\tilde z_2=z_2-\frac{\eps f_{3s}}{\alpha(1+\phi_1(u_s))^2\phi_1'(u_s)}$ yields, after some lengthy but straightforward algebra, using the relations in \eref{f23s} and \eref{ove} to show that any terms not propotional to $z_1$ or $z_3$ vanish, 
\begin{eqnarray*}
\dot{\tilde z}_2&=&f_2-\dot z_3y_{2L}'(z_3)\\
&=&qz_1+\frac p\alpha z_3+\ord{z_3^2,z_1z_3}
\end{eqnarray*}
where
$$q=\frac{\partial f_{2s}}{\partial\phi_1}-\frac{\partial f_{3s}}{\partial\phi_1}\frac{1-\phi_1(u_s)}{1+\phi_1(u_s)}\;,\qquad
p=-\frac{2f_{3s}+q(1+\phi_1(u_s))^2}{2(1+\phi_1(u_s))^2}\;.$$
The last thing to do is just scaling. Collecting everything together so far we have
\begin{equation*}
\begin{array}{rcl}
\eps\dot z_1&=&(d_1{\tilde z}_2-\alpha z_1^2)\phi_1'(u_s)+\ord{\eps z,\eps z_3,z_1z_3}\\
\dot {\tilde z}_2&=&\frac p\alpha z_3+qz_1+\ord{z_3^2,z_1z_3}\\
\dot z_3&=& f_{3s}+\ord{z_3,z_1}
\end{array}
\end{equation*}
where $d_1=-\hf(1+\phi_1(u_s))$. 
Defining new variables $\tilde x_1=\sqrt{|\alpha|\phi_1'(u_s)}z_1$, $\tilde x_2=-{\rm sign}(\alpha)d_1\phi_1'(u_s) {\tilde z}_2$, $\tilde x_3=-{\rm sign}(\alpha)z_3$, and $\tilde t=-{\rm sign}(\alpha)t$, gives
\begin{equation}\label{wechs}
\begin{array}{rcl}
\eps\dot{\tilde x}_1&=&\tilde x_2+\tilde x_1^2+\ord{\eps x_1,\eps\tilde x_3,\tilde x_1\tilde x_3}\\
\dot{\tilde x}_2&=&\tilde p\tilde x_3+\tilde q\tilde x_1+\ord{\tilde x_3^2,\tilde x_1\tilde x_3}\\
\dot{\tilde x}_3&=&\tilde r+\ord{\tilde x_3,\tilde x_1}
\end{array}
\end{equation}
where
\begin{equation}\label{wp}
\begin{array}{l}
\tilde r= f_{3s}
\;,\quad\tilde p
=-\frac{1}{4|\alpha|\phi_1'(u_s)}\bb{f_{2s}+ f_{3s}-2\tilde q\sqrt{|\alpha|\phi_1'(u_s)}}\;,\\
\tilde q
=\frac{-1}{2\sqrt{|\alpha|\phi_1'(u_s)}}\bb{(\phi_1(u_s)+1)\frac{\partial f_{2s}}{\partial\phi_1}+(\phi_1(u_s)-1)\frac{\partial f_{3s}}{\partial\phi_1}}
\;.
\end{array}
\end{equation}
Omitting the tildes, this is the result in the lemma and in Proposition \ref{thm:l3}, clearly valid only for $\alpha\neq0$. 

\end{proof}

\noindent{\bf Remarks on the singularity}

A glance at the papers \cite{w05,w12,des10} reveals what a charismatic singularity lies hidden in the dynamics of the two-fold, waiting to be released when the piecewise smooth system is perturbed by simulations that smooth, regularize, or otherwise approximate the discontinuity. As for the two-fold itself in \sref{sec:2fold}, a detailed description is beyond our interest here and can be pursued in future work, but as a guide we shall briefly gather together the main points from the literature. 


For sufficiently small $\eps>0$, by standard results of geometric singular perturbation theory \cite{f79}, there exist invariant manifolds $\op M^S_\eps$ in the neighbourhood of $\op M^S/\op L$, on which the dynamics is topologically equivalent to the sliding dynamics found above. Trajectories that pass close to the singularity, or more precisely, close to the folded singularity on the set $\op L$, may persist in following the  manifold $\op M^S$ from its stable to unstable branches, while other nearby trajectories will veer wildly away, their fate sensitive to initial conditions and proximity to primary canard orbits (those which persist along both branches of $\op M^S$ throughout the local region).

\Fref{fig:2fs} shows an example of the perturbed system and its regularization for each flavour of two-fold in (i) (corresponding to those in \fref{fig:2folddet}), followed by their regularization (ii), and a rotation (iii) to show the phase portrait around the set $\op L$ more clearly (similar to \fref{fig:2fu}). 
In the most extreme case, the folded node, the original phase portrait contains infinitely many intersecting trajectories traversing the singularity, while the perturbed system splits these into distinguishable orbits, a finite number of which asymptote to the attracting and repelling branches of the critical manifold.  

\begin{figure}[h!]\centering\includegraphics[width=\textwidth]{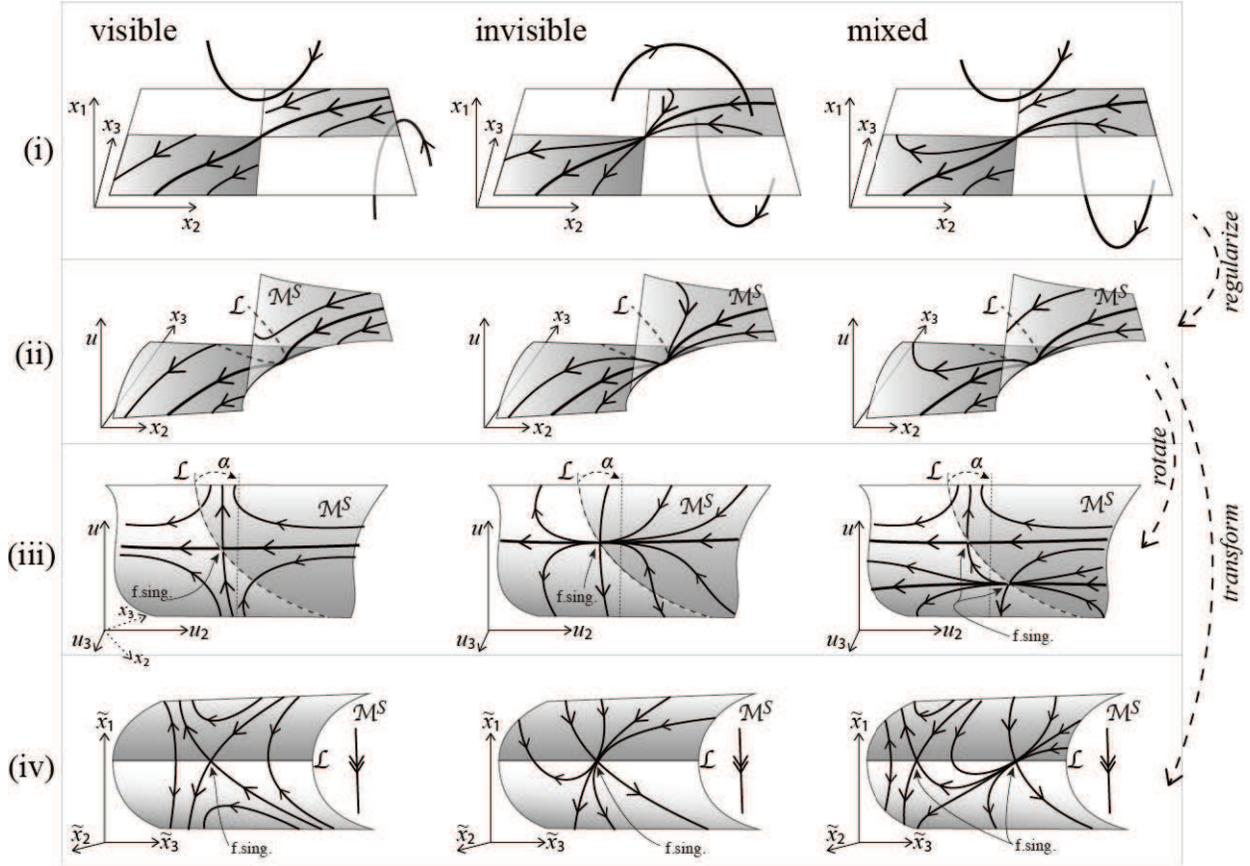}
\vspace{-0.3cm}\caption{\small\sf blowing up the perturbed ($\alpha\neq0$) system, for examples of each flavour of two-fold. Labelling as in \fref{fig:2fu}. Note in the regularization (ii) that $\op L$ is now a curve. Rotating around the $u$ axis in (iii) we can see the attracting branch (upper right segment) and repelling branch (lower left segment) of the sliding manifold $\op M^S$ (shaded), connected by $\op L$. The folded singularity (f.sing.) appears along $\op L$, two in the case of mixed visibility, recognised as having a phase portrait that resembles a saddle or node if we reverse time in the repelling branch of $\op M^S$. In (iv) we sketch the corresponding phase portraits in the slow-fast system \eref{wechs}. 
}\label{fig:2fr}\end{figure}

Like the different kinds of two-fold, there are different classes of folded singularity, and their classification depends on the slow dynamics inside $\op M^S$. From the expressions \eref{wechs}-\eref{wp} we see that the class therefore depends not only on the constants $a_1,a_2,b_1,b_2,$ of the original piecewise smooth system, but also on the `hidden' parameter $\alpha$.  
%

The classification scheme is fairly simple, and can be used to verify the dynamics on $\op M^S$ seen in \fref{fig:2fr}. The projection of the system \eref{wechs} onto $\op M^S$, found by differentiating the condition $0=\tilde x_2+\tilde x_1^2$ with respect to time to give $0=\tilde b\tilde x_3+\tilde c\tilde x_1+2\tilde x_1\dot{\tilde x}_1+\ord{\tilde x_3^2,\tilde x_1\tilde x_3}$, is
$$\bb{\begin{array}{c}\dot{\tilde x}_1\\\dot{\tilde x}_3\end{array}}=\frac1{-2\tilde x_1}\cc{\bb{\begin{array}{cc}\tilde c&\tilde b\\-2\tilde a&0\end{array}}\bb{\begin{array}{c}\tilde x_1\\\tilde x_3\end{array}}+\ord{\tilde x_3^2,\tilde x_1\tilde x_3}}\;.$$
A classification then follows by neglecting the singular prefactor $1/2\tilde x_1$ and considering whether the phase portrait is that of a focus, a node, or a saddle. This is determined by the $2\times2$ matrix Jacobian, which has 
trace $\tilde c$, determinant $2\tilde a\tilde b$, and eigenvalues $\hf(\tilde c\pm\sqrt{\tilde c^2-8\tilde a\tilde b})$. 
This will not be the true system's phase portrait because the time-scaling from the $1/2\tilde x_1$ factor is positive in the attractive branch of $\op M^S$, negative (time-reversing) in the repulsive branch, and divergent at the singularity (turning infinite time convergence to the singularity into finite time passage {\it through} the singularity). The effect of this is to `fold' together attracting and repelling pairs of each equilibrium type, so each equilibrium becomes a `folded-equilibrium', forming a continuous bridge between branches of $\op M^S$.

As a result the flow on $\op M^S$ is a folded-saddle if $\tilde a\tilde b<0$, a folded-node if $0<8\tilde a\tilde b<\tilde c^2$, and a folded-focus if $\tilde c^2<8\tilde a\tilde b$. Canard cases occur for $\tilde c>0$ and faux canard for $\tilde c<0$. In \fref{fig:2fr} we show the result of regularizing the piecewise smooth system for an example of each type of two-fold that exhibits determinacy-breaking (those from \fref{fig:2folddet}). In the visible two-fold the singularity becomes a folded-saddle, in the invisible case it becomes a folded-node, while the mixed case becomes a pair consisting of one folded-saddle and one folded-node.

One may ask why certain cases were excluded by the proposition above. The excluded cases were those in which no canards exist in the slow-fast system. Canards occur when transversal intersections exist between the attracting and repelling branches of the slow manifolds. If no such intersections exist, the critical system possesses no folded singularities and hence is excluded from Proposition \ref{thm:non}. 
Hence the omission of these cases is consistent, and a posteori it is obviously necessary, in the equivalence sought in the proposition. 

%

With our main result proven, we conclude with two sections relevent to the study above, which help elucidate certain ideas that have arisen lately in the study of piecewise smooth dynamical systems, which are particularly relevant to the study of singularities like the two-fold.

\section{``Discontinuity blow-up'', an approach to nonsmooth systems without regularization}

Many of the relations in the analysis above had to written implicitly in terms of the regularization function $\phi_1(u)$, having introduced $\lambda=\phi_\eps(x_1)=\phi_1(u)$. We could have proceded instead by studying the dynamics of $\lambda$ directly, omitting reference to a smoothing function $\phi_\eps$ altogether. This alternative approach to nonsmooth systems was discussed in \cite{j13error}, and makes the results above somewhat more concise and explicit, but a couple of propositions are required to show that standard concepts from geometric singular perturbation theory can be applied on $(\lambda,x_2,x_3)$ in place of $(u,x_2,x_3)$. Having followed the conventional route above, we provide the basic results needed for this alternative route here. 

Firstly we require a dynamical system on $\lambda$. 
\begin{proposition}\label{thm:ldot}
The dynamics of $\lambda$ is given by
\begin{equation}\label{le}
\e(\lambda,\eps)\dot\lambda=\dot x_1={\bf f}({\bf x},\lambda)\cdot\nabla x_1\;
\end{equation}
such that $\e(\lambda,\eps)\ll1$, where $\e$ denotes a continuous positive function and $\eps$ a small parameter, with $0<\eps<\eps^*\ll1$ and $\lambda\in(\phi_{\eps*}^-,\phi_{\eps*}^+)$, in terms of constants $\eps^*$ and $\phi_{\eps*}^\pm$ that satisfy $\phi_{\eps*}^\pm\rightarrow\pm1$ as $\eps^*\rightarrow0$. 
\end{proposition}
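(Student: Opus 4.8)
The plan is to obtain the claimed equation directly from the defining relation $\lambda=\phi_\eps(x_1)=\phi_1(u)$ used throughout the regularization, by differentiating it along the flow. First I would record the trivial identity $\dot x_1={\bf f}({\bf x},\lambda)\cdot\nabla x_1$, which holds because $\nabla x_1=(1,0,0)$ merely selects the first component $f_1$ of $\dot{\bf x}={\bf f}$. The substance of the proposition is therefore the relation $\e(\lambda,\eps)\dot\lambda=\dot x_1$ together with the stated properties of $\e$: that it is continuous, positive, and small, on a $\lambda$-interval whose endpoints approach $\pm1$.

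The core step is the chain rule. Differentiating $\lambda=\phi_\eps(x_1)$ in $t$ gives $\dot\lambda=\phi_\eps'(x_1)\,\dot x_1$. Since $\phi_\eps'(x_1)>0$ on the transition region $|x_1|<\eps$ by \eref{Peps}, I can divide and set $\e(\lambda,\eps):=1/\phi_\eps'(x_1)$, which yields $\e\,\dot\lambda=\dot x_1$ exactly. To present $\e$ as a function of $\lambda$ rather than of $x_1$, I would invoke the strict monotonicity of $\phi_\eps$ on $|x_1|<\eps$: it admits an inverse $x_1=\phi_\eps^{-1}(\lambda)$, so that $\e(\lambda,\eps)=1/\phi_\eps'(\phi_\eps^{-1}(\lambda))$ is well-defined, continuous and positive wherever $\phi_\eps'\neq0$.

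The smallness $\e\ll1$ and the domain restrictions are cleanest after rescaling to $u=x_1/\eps$. Because $\phi_\eps(x_1)=\phi_1(u)$, we have $\phi_\eps'(x_1)=\phi_1'(u)/\eps$ and hence $\e=\eps/\phi_1'(u)$, where $\phi_1$ is a fixed, $\eps$-independent sigmoid with $\phi_1'>0$ on $(-1,1)$. On any compact interior subinterval $u\in[-1+\delta,1-\delta]$, $\phi_1'$ is bounded below by a positive constant $m_\delta$, so $\e\le\eps/m_\delta=\ord\eps\ll1$ once $\eps$ is small. I would then define the multiplier-range endpoints as the images of this interval, $\phi_{\eps*}^\pm:=\phi_1(\pm(1-\delta))$ (equivalently the boundary values of $\phi_{\eps^*}$), and couple $\delta$ to the cutoff $\eps^*$ so that $\delta\to0$ as $\eps^*\to0$; since $\phi_1(\pm1)=\pm1$ up to the $\ord\eps$ tolerance of \eref{Peps}, this gives $\phi_{\eps*}^\pm\to\pm1$ while keeping $\e$ uniformly $\ord\eps$ over all $0<\eps<\eps^*$ and $\lambda\in(\phi_{\eps*}^-,\phi_{\eps*}^+)$.

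The main obstacle is precisely this uniform control of $\e$ near the edges of the transition region: a generic sigmoid satisfies $\phi_1'(u)\to0$ as $|u|\to1$, so $\e=\eps/\phi_1'(u)$ fails to be small there and may even diverge. This is why the conclusion cannot hold on the full interval $\lambda\in(-1,1)$ for fixed $\eps$, and why the statement restricts $\lambda$ to the interior interval $(\phi_{\eps*}^-,\phi_{\eps*}^+)$ whose endpoints are allowed to approach $\pm1$ only in the joint limit $\eps^*\to0$. The delicate bookkeeping is therefore the coupled choice of $\eps^*$ and $\delta(\eps^*)$ ensuring both $\e\ll1$ (which needs $\eps\ll m_\delta$) and $\phi_{\eps*}^\pm\to\pm1$ (which needs $\delta\to0$); everything else reduces to the one-line chain rule and the elementary invertibility of a monotone function.
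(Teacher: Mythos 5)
Your proposal is correct and follows essentially the same route as the paper: differentiate $\lambda=\phi_\eps(x_1)$ along the flow, invert the monotone sigmoid to write $\e(\lambda,\eps)=\eps/\phi_1'(\psi(\lambda))$, and obtain smallness of $\e$ by restricting $\lambda$ to an interior interval bounded away from $\pm1$, with endpoints tending to $\pm1$ as $\eps^*\to0$. The only cosmetic difference is bookkeeping: the paper fixes the interval via the level set $\phi_1'(u_{\eps*}^\pm)=\eps_*$ (so $\e<\eps/\eps_*\ll1$ for $\eps\ll\eps_*$), whereas you fix a compact subinterval $|u|\le1-\delta$ with lower bound $m_\delta$ and couple $\delta$ to $\eps^*$ --- the same idea in a slightly different parameterization.
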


\begin{proof}
Consider regularizing the vector field \eref{fnon} by replacing $\lambda$ with a differentiable sigmoid function $\phi_\eps(x_1)$ as defined in \eref{Peps}. We shall use the relation $\lambda=\phi_\eps(x_1)$ to derive a dynamical system on $\lambda$. 
Differentiating $\lambda=\phi_1(u)$ with respect to $t$ gives 
\begin{eqnarray}\label{ldot}
\dot\lambda
&=&\frac{\dot x_1}\eps\phi_\eps'(x_1)\qquad\mbox{for}\quad|x_1|<\eps\;.
\end{eqnarray}
Considering a variable $u=x_1/\eps$ we see that, according to \eref{Peps}, the function $\phi_\eps(x_1)=\phi_1(u)$ and its derivative $\eps\phi_\eps'(x_1)=\phi_1'(u)$ are smooth with respect to $u$ in the limit $\eps\rightarrow0$. Moreover $\phi_1'(u)$ is strictly positive because $\phi_1(u)$ is strictly increasing, and $\phi_1'(v/\eps)$ only becomes small (or vanishing) for $|x_1|/\eps>1$. So the quantity $\eps/\phi_1'$ is small and nonzero for $|x_1|/\eps\le1$, and using it we define a fast timescale $\tau=t\phi_\eps'(x_1)/\eps$. Since $\phi_1(u)$ is differentiable and monotonic for $|u|<1$, it has an inverse $\psi(\lambda)$ such that $\psi(\phi_1(u))=u$, and we can define a function 
\begin{equation}
\e(\lambda,\eps):=\eps/\phi_1'(\psi(\lambda))\;,\qquad\mbox{for}\quad|\lambda|<1\;.
\end{equation}
That this quantity is small is shown as follows: the function $\phi_1(u)$ varies differentiably over interval on which its extremal values are $\phi_1(\pm1)=\pm1$, therefore there exists a point $u_*$ where $\phi'(u_*)=\frac{\phi_1(+1)-\phi_1(-1)}{(+1)-(-1)}=1$, and by continuity since $\phi_1'(\pm1)=0$, there exist two points $u_\eps^\pm$ where $\phi'(u_\eps^\pm)=\pm\eps$ for $0<\eps<1$, and moreover an interval $u_\eps^-<u<u_\eps^+$ such that $\phi_1'(u)>\eps$. Fix some $\eps_*$ such that $0<\eps_*\ll1$, then $\eps/\phi_1(u)<1$ for $u_{\eps*}^-<u<u_{\eps*}^+$, and 
$$\lim_{\eps\rightarrow0}\e(\lambda,\eps)=0\;,$$
so that $\e(\lambda,\eps)\ll1$ for $\eps\ll\eps_*$ and $u\in\bb{u_{\eps*}^-,u_{\eps*}^+}$. 

By \eref{ldot} we therefore have the dynamical equation 
$\e(\lambda,\eps)\dot\lambda=\dot x_1={\bf f}({\bf x},\lambda)\cdot\nabla x_1$
in the proposition. 
\end{proof}

This proposition identifies $\lambda$ as a fast variable inside $\lambda\in(-1,+1)$ (more strictly for $\lambda\in(\phi_{\eps*}^-,\phi_{\eps*}^+)$ where $\phi_{\eps*}^\pm=\phi_1(u_{\eps*}^\pm)$, and $\eps^*$ is arbitrarily small but nonzero). 
When $\lambda$ is set-valued on $v=0$ with $\eps=0$, this equation determines its variation on the  timescale $\tau$ which is instantaneous relative to the timescale $t$. In the piecewise smooth dynamics literature this is sometimes referred to as providing the {\it blow-up} of the discontinuity surface $x_1=0$ into an interval $\lambda\in\sq{-1,+1}$. 

The system obtained by applying \eref{le} to \eref{2f}, using \eref{fnon} on the interval $\lambda\in\bb{\phi_{\eps*}^-,\phi_{\eps*}^+}$, is then 
\begin{equation}\label{lblowup}
(\e\dot\lambda,\dot x_2,\dot x_3)=\bb{f_1(x_1,x_2,x_3;\lambda),\;f_2(x_1,x_2,x_3;\lambda),\;f_3(x_1,x_2,x_3;\lambda)}\;.
\end{equation}
Where possible we can omit the arguments of $\e=\e(\lambda,\eps)$ without confusion. 
While standard geometrical singular perturbation theory does not apply to \eref{lblowup} because $\e$ is a function, we can show easily that this leads to the same critical manifold geometry as the conventional approach outlined in \sref{sec:blowup}. 

\begin{proposition}\label{Mtransfer}
The system \eref{lblowup} has equivalent slow-fast dynamics to the system \eref{blowup} on the discontinuity set $x_1=0$ in the critical limit $\e=0$.  
\end{proposition}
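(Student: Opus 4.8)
The plan is to show that the two systems, \eqref{blowup} and \eqref{lblowup}, describe the same critical manifold and the same fast and slow dynamics once we identify the fast variables $u$ and $\lambda$ through the monotone change of coordinates $\lambda=\phi_1(u)$. First I would observe that, by Proposition~\ref{thm:ldot} and the definition $\e(\lambda,\eps)=\eps/\phi_1'(\psi(\lambda))$, the map $u\mapsto\lambda=\phi_1(u)$ is a smooth, strictly monotone diffeomorphism from $|u|<1$ onto $|\lambda|<1$ (with inverse $\psi$), and hence carries trajectories of one system into trajectories of the other up to a positive reparameterization of time. Since $f_i(\eps u,x_2,x_3;\phi_1(u))$ and $f_i(x_1,x_2,x_3;\lambda)$ agree under this identification when $\eps=0$ (so that $x_1=\eps u=0$), the right-hand sides of the two slow equations $(\dot x_2,\dot x_3)$ coincide on $x_1=0$ in the critical limit.

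The key steps, in order, are as follows. First I would take the critical limit. In \eqref{blowup} this is $\eps\rightarrow0$, giving \eqref{blowup0}; in \eqref{lblowup} it is $\e(\lambda,\eps)\rightarrow0$, which by Proposition~\ref{thm:ldot} is equivalent to $\eps\rightarrow0$ on the relevant $\lambda$-interval. In both cases the critical (slow) subsystem is obtained by imposing $0=f_1(0,x_2,x_3;\lambda)$ with $\lambda=\phi_1(u)$. Second I would identify the critical manifolds: the sliding manifold $\op M^S$ defined by \eqref{Mdef} is expressed in terms of $\phi_1(u)$, and substituting $\lambda=\phi_1(u)$ shows that the set $f_1(0,x_2,x_3;\lambda)=0$ in $(\lambda,x_2,x_3)$-space is precisely the image of $\op M^S$ under the diffeomorphism. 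Third I would check the non-hyperbolic set $\op L$ from \eqref{Ldef}: since $\partial_u f_1 = \phi_1'(u)\,\partial_\lambda f_1$ and $\phi_1'(u)>0$ for $|u|<1$, the condition $\partial_u f_1=0$ is equivalent to $\partial_\lambda f_1=0$, so $\op L$ and its image coincide and the attracting/repelling character of the two branches is preserved. Finally I would match the fast dynamics: rescaling time by the positive factor $\phi_1'(u)=\eps/\e$ turns the $\lambda$-layer problem into the $u$-layer problem \eqref{blowout}, and because the rescaling factor is strictly positive the orientation of the flow and the stability of equilibria are unchanged. Taken together these establish topological equivalence of the slow-fast dynamics on $\op M^S/\op L$.

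The main obstacle I anticipate is handling the fact that $\e(\lambda,\eps)$ is a genuine function of $\lambda$ rather than a constant, so that the standard Fenichel theory invoked for \eqref{blowup} does not literally apply to \eqref{lblowup}. The resolution is to note that $\e$ is continuous and strictly positive on the open interval $|\lambda|<1$ (equivalently $|u|<1$), so dividing by it is a legitimate smooth time reparameterization that does not alter the phase portrait; the degeneration $\phi_1'(u)\rightarrow0$ occurs only at $|u|=1$, which lies outside the region of interest as fixed by the interval $(\phi_{\eps*}^-,\phi_{\eps*}^+)$ in Proposition~\ref{thm:ldot}. Thus the variable coefficient only rescales time within $\op M^S$ and, in the critical limit $\e=0$, produces exactly the same invariant set and the same directed flow (up to positive time-scaling) as the conventional regularization, which is what equivalence requires. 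A secondary point to be careful about is that the equivalence is asserted only on the discontinuity set $x_1=0$ in the limit, so I would restrict all comparisons to $\eps u=0$ and avoid claiming anything about the full $\eps>0$ foliation, where the two constructions differ by the choice of smoothing profile.
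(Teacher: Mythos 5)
Your proposal is correct and follows essentially the same route as the paper: both identify the two systems through $\lambda=\phi_1(u)$, use the chain-rule relation $\partial_u f_1=\phi_1'(u)\,\partial_\lambda f_1$ with $\phi_1'(u)>0$ to match $\op M^S$ and $\op L$, and observe that the slow critical subsystems \eref{blowup0} and \eref{lblowup0} coincide on $x_1=0$. Your explicit framing as a monotone diffeomorphism with positive time reparameterization, and your remark on $\e$ being a function of $\lambda$, merely spell out what the paper's terser proof leaves implicit.
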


\begin{proof}
Rescaling time in \eref{lblowup} to $\tau=t/\e$, then setting $\e=0$ and $x_1=0$, gives the fast critical subsystem
\begin{equation}\label{lfast}
(\lambda',x_2',x_3')=\bb{f_1(0,x_2,x_3;\lambda),\;0,\;0}\;.
\end{equation}
The equilibria of this one-dimensional system form the manifold $$\op M^S=\cc{(\lambda,x_2,x_3)\in[-1,+1]\times\mathbb R^2:\;f_1(0,x_2,x_3;\lambda)=0}\;,$$
which is equivalent to the manifold \eref{Mdef}. 
This is an invariant manifold of the system \eref{lblowup} in the $\e=0$ limit everywhere that $\op M^S$ is normally hyperbolic, that is excepting the set
$$
\op L=\cc{(\lambda,x_2,x_3)\in\op M^S:\;\frac{\partial\;}{\partial \lambda}f_1(0,x_2,x_3;\lambda)=0}\;.
$$
Since $\frac{\partial\;}{\partial u}f_1=\phi_1'(u)\frac{\partial\;}{\partial\lambda}f_1$ and $\phi_1'(u)\neq0$ for $|u|<1$, 
this definition of $\op L$ is equivalent to \eref{Ldef}. 
Setting $\e=0$ and $x_1=0$ in \eref{lblowup} gives the slow critical subsystem
\begin{equation}\label{lblowup0}
(0,\dot x_2,\dot x_3)=\bb{f_1(0,x_2,x_3;\lambda),\;f_2(0,x_2,x_3;\lambda),\;f_3(0,x_2,x_3;\lambda)}\;,
\end{equation}
which defines dynamics in the {critical limit} $\e=0$ on $\op M^S$, which is exactly as given by \eref{blowup0}.  

\end{proof}

In \cite{j13error} an extension to the Filippov approach to piecewise smooth dynamics is therefore proposed, introducing directly a dummy timescale $\tau=t/\e$. Denoting the derivative with respect to this fast time $\tau$ we have simply
\begin{equation*}
\lambda'=\dot x_1={\bf f}({\bf x},\lambda)\cdot\nabla x_1\;,
\end{equation*}
and in full
\begin{equation}\label{dummy}
(\lambda',\dot x_2,\dot x_3)=\bb{f_1(x_1,x_2,x_3;\lambda),\;f_2(x_1,x_2,x_3;\lambda),\;f_3(x_1,x_2,x_3;\lambda)}\;.
\end{equation}
This permits study of the slow-fast critical dynamics without reference to regularization functions, providing a more direct analysis of sliding dynamics. 
The system \eref{dummy} directly permits us to consider the discontinuity surface $x_1=0$, and fixes the value that $\lambda$ takes on the interval $[-1,+1]$ inside the manifold $\op M^S$ (when it exists) for which the set $x_1=0$ is invariant, and specifying the variation $(\dot x_2,\dot x_3)$ on that manifold. In regions of $x_1=0$ where $\op M^S$ does not exist (where $f_1\neq0$ for $x_1=0$ and $\lambda\in[-1,+1]$), the fast subsystem of \eref{dummy} conveys the flow across the discontinuity from one region $x_1\gtrless0$ to the other. This analysis via the dummy system \eref{dummy} can be applied to reformulate the previous sections without regularization, yielding equivalent results, the main difference being that $\phi_1(u)$ and $\phi_1'(u)$ can be replaced everywhere by $\lambda$ and $1$ respectively.

\section{Nonlinear switching terms, and the ambiguity of regularization}\label{sec:prelude}

The vector field in \sref{sec:2fpert} demonstrates that qualitatively different geometry can be obtained if we consider perturbations outside the Sotomayor-Teixeira regularization. The dynamics that results is rather complex, however, so we shall briefly show an example of two non-equivalent smooth systems with the same piecewise smooth limit \eref{fns}. This highlights that there must be some ambiguity in how to regularize the discontinuous system into a smooth system. We show that the nonlinear dependence on $\lambda$ in \eref{fnon} provides a means to disctinguish between the different possible regularizations, and study them in the piecewise smooth limit. 

Consider two smooth system given by 
\begin{eqnarray}
(\dot x,\dot y)&=&(\phi_\eps(x),1)\;,\label{slin}\\
(\dot x,\dot y)&=&(\phi_\eps(x),1-2\phi_\eps^2(x))\;,\label{snon}
\end{eqnarray} 
where $\eps$ is a small positive parameter and $\phi_\eps(x)$ is a sigmoid function as defined in \eref{Peps}. We will refer to \eref{slin} and \eref{snon} as the linear and nonlinear systems, respectively. 

A common approach taken to studying sigmoid systems like \eref{slin} or \eref{snon} in applications, particularly when the systems above represent empirical models, is to study the system obtained by replacing $\phi_\eps(x)$ with ${\rm sign}(x)$ in the limit $\eps=0$. The result is a piecewise smooth system given for $x\neq0$ by 
\begin{equation}\label{sdisc}
(\dot x,\dot y)=({\rm sign}(x),-1)\;,
\end{equation}
for both \eref{slin} and \eref{snon}. 
The system is set-valued at $x=0$, and the question is then how to solve the differential inclusion at $x=0$. In the Filippov method we replace ${\rm sign}(x)$ with a switching parameter $\lambda$, such that $\lambda={\rm sign}(x)$ for $x\neq0$ and $\lambda\in[-1,+1]$ for $x=0$, and write 
\begin{equation}\label{sfil}
(\dot x,\dot y)=(\lambda,-1)\;,\qquad\lambda\in[-1,+1]\;.
\end{equation}
This is simply the system obtained by applying the convex combination \eref{ffil} to \eref{sdisc}. 
A \fref{fig:eg} illustrates, the two vector fields point towards $x=0$ so the entire discontinuity line must be a sliding region. 
The Sotomayor-Teixeira regularization smooths this by replacing $\lambda$ again with another sigmoid function $\phi_\eps(x)$ (obeying the same properties as in \eref{Peps})
$$(\dot x,\dot y)=\bb{\phi_\eps(x),-1}\;.$$

This recovers the system \eref{slin}, but not \eref{snon}. The dynamics of the two is crucially different. The linear system \eref{slin} has a critical manifold, given by $u=0$ if we assume $\phi_1(0)=0$. The dynamics on the manifold manifold is given by $(\dot u,\dot y)=(0,-1)$. 
To see this, introducing a fast coordinate $u=\eps x$, we have 
$$(\eps\dot u,\dot y)=(\phi_1(u),-1)\;$$
and apply standard geometrical singular perturbation \cite{f79,j95} in the limit $\eps\rightarrow0$. For small positive $\eps$ the dynamics near $u=0$ should be a small perturbation of $(\dot u,\dot y)=(0,-1)$. This is equivalent to the Filippov sliding dynamics on $x=0$ in the piecewise smooth system, $(\dot x,\dot y)=(0,-1)$. 

We failed to obtain \eref{snon} under regularization because we omitted what happens to the $\phi_\eps^2(x)$ term in the piecewise smooth limit (as $\eps\rightarrow0$). If we instead apply \eref{fnon} to \eref{sdisc} with ${\bf g}=(0,2)$, we obtain 
\begin{equation}\label{snonfil}
(\dot x.\dot y)=(\lambda,1-2\lambda^2)\;.
\end{equation}
When we regularize this we obtain 
$$(\dot x.\dot y)=(\phi_\eps(x),1-2\phi_\eps^2(x))\;,$$
and thus we have regained the nonlinear system \eref{snon} under regularization. The nonlinear system again has a critical manifold, given by $u=0$ if we assume $\phi_1(0)=0$, but now the dynamics on the manifold manifold is given by $(\dot u,\dot y)=(0,+1)$. 
This is seen by introducing a fast coordinate $u=\eps x$, giving 
$$(\eps\dot u,\dot y)=(\phi_1(u),1-2\phi_1^2(u))\;$$
and applying standard geometrical singular perturbation theory in the limit $\eps\rightarrow0$. For small positive $\eps$ the dynamics near $u=0$ should be a small perturbation of $(\dot u,\dot y)=(0,+1)$, and not in any way close to that of the linear system (which points in the opposite direction), as illustrated in \fref{fig:eg}. This dynamics is also not equivalent to the Filippov sliding dynamics of \eref{sdisc} via \eref{sfil}, but is instead equivalent to the nonlinear or `hidden' sliding dynamics of \eref{sdisc} via \eref{snonfil}, as described in \cite{j13error}. 

\begin{figure}[h!]\centering\includegraphics[width=0.95\textwidth]{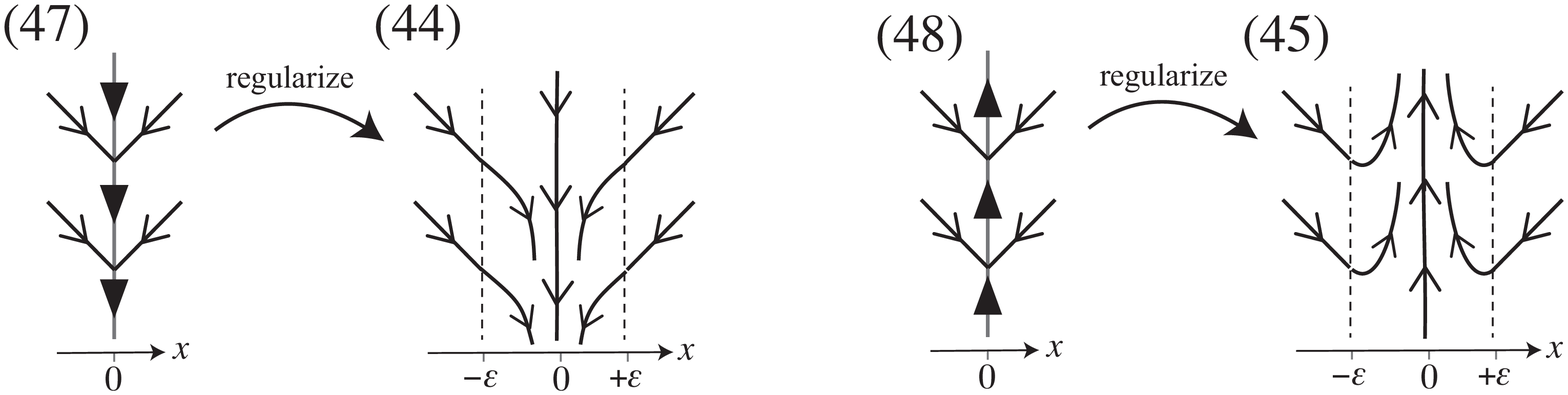}
\vspace{-0.3cm}\caption{\small\sf Two approaches to the piecewise smooth system \eref{sdisc}: the linear combination \eref{sfil} and nonlinear combination \eref{snonfil}, with their regularizations \eref{slin} and \eref{snon}. }\label{fig:eg}\end{figure}
%

\section{Closing Remarks}\label{sec:conc}

Regularizing or smoothing a discontinuity of course raises issues of uniqueness, namely that infinitely many qualitatively different smooth systems can have the same piecewise smooth limit. The idea of {\it nonlinear switching} terms --- nonlinear dependence on the parameter $\lambda$ in \eref{fnon} --- is that they provide a way of restoring uniqueness by distinguishing the limits of different smooth systems. We have shown here that the smooth system obtained from a two-fold singularity is structurally unstable if it depends only linearly on $\lambda$, but that a small perturbation, by terms that are nonlinear in $\lambda$, restores structural stability and allows transformation into the general local singularity expected in a smooth system, namely the folded singularity. 

For such a simple system, even taking its piecewise linear local normal form, the two-fold exhibits intricate and varied dynamics. Just how intricate becomes even more clear as we attempt to regularize the discontinuity, and study how the two-fold related to slow-fast dynamics of smooth systems. As well as insight into the dynamics that is seen upon simulated such a system, 
this adds a new facet to the question of the structural stability of the two-fold, which has remained a stimulating question since \cite{t90}.

An in-depth description of the dynamics that ensues in the different cases of two-folds, and the smoothings subject to perturbations, would be lengthy, and deserves future study elsewhere. 
As a demonstration we conclude with three examples showing the complex oscillatory attractors that can be formed by two-fold singularities. We take
\begin{enumerate}
\item[(i)] ${\bf f}^+=(-x_2,\sfrac25x_1+\sfrac1{10}x_2-1,\sfrac3{10}x_2-\sfrac15x_2x_3-\sfrac25)$, ${\bf f}^-=(x_3,\sfrac15x_2x_3-\sfrac35,\sfrac25x_3-1-x_1)$;
\item[(ii)] ${\bf f}^+=(-x_2,1+x_1,-\sfrac75)$, ${\bf f}^-=(x_3,-\sfrac9{10},1-\sfrac35x_1)$;
\item[(iii)] ${\bf f}^+=(-x_2+\sfrac1{10}x_1,x_1-\sfrac65,x_1-2)$, ${\bf f}^-=(x_3+\sfrac1{10}x_1,x_1+\sfrac{23}{100},1-x_1)$;
\end{enumerate}
and use \ref{fnon} with $\alpha=1/5$, and simulate them in \fref{fig:2fs}. 
\begin{figure}[h!]\centering\includegraphics[width=\textwidth]{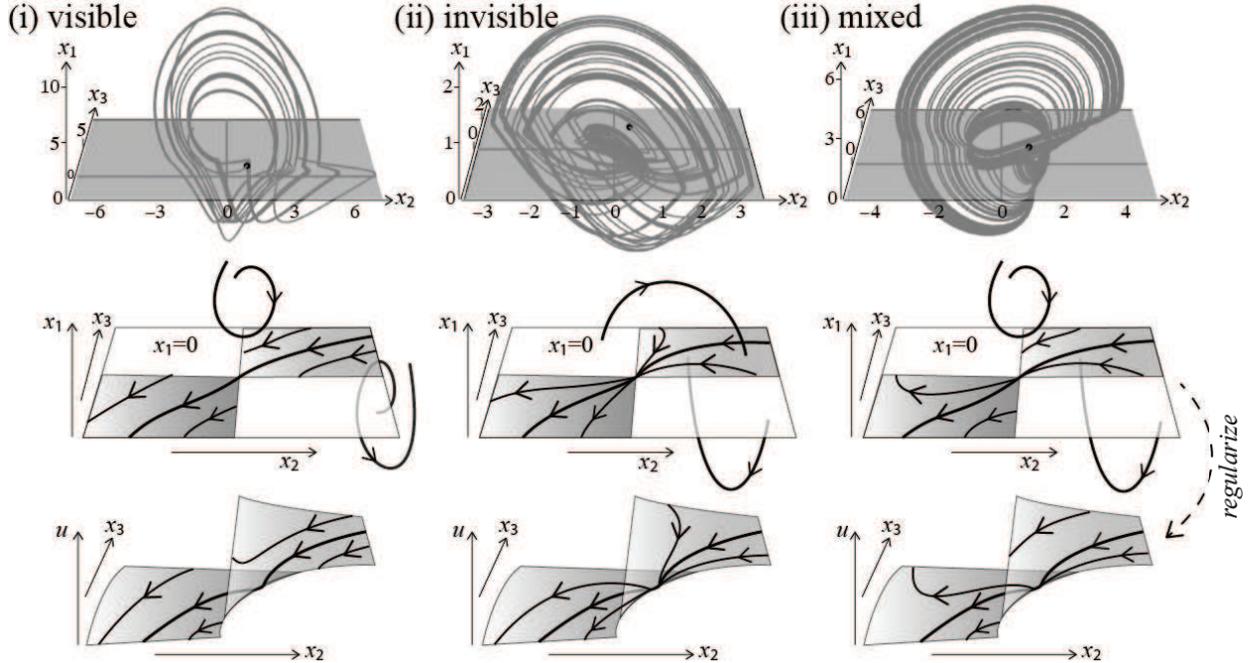}
\vspace{-0.3cm}\caption{\small\sf Three examples of attractor organised around a two-fold singularity. Showing: (i) a simulated trajectory, (ii) a sketch of the piecewise smooth flow inside and outside $x_1=0$ that gives rise to it, and (iii) the blow up on $x_1=0$.}\label{fig:2fs}\end{figure}

The numerical solutions apply Mathematica's {\sf NDSolve} to a regularized system, replacing $\lambda={\rm sign}\;x_1$ by a sigmoid function $\tanh(x_1/\eps)$ with $\eps=10^{-5}$. 
Further simulations not shown here verify that such dynamics persists with different monotonic smoothing functions, such as $x_1/\sqrt{\eps^2+x_1^2}$, and for different values of $\eps$. The first row shows the simulation of a single trajectory for a time interval $t=1000$ in $(x_1,x_2,x_3)$ space, as the flow attempts to switch between the the $x_1>0$ and $x_1<0$ flows, and the critical slow manifold flow. The piecewise smooth system is sketched in the second row, and the regularization in the third row, including the critical manifold $\op M^S$. 
The sliding vector field has canard trajectories passing from the righthand attracting branch to the lefthand repelling branch via the folded singularity; at a visible two-fold only one canard exists, at an invisible two-fold every sliding trajectory is a canard, and at a mixed two-fold a region of trajectories are canards.

The degeneracy in \sref{sec:2fdummy} gives some insight into why simulations of systems containing two-fold singularities with determinacy-breaking singularities exhibit highly sensitive dynamics, by relating it to a canard generating singularity in a singularly perturbed system. 


In the ongoing saga of the two-fold, the system \eref{2Fp} now succeeds \eref{2fg0} as our prototype for the local dynamics. The question of whether this constitutes a `normal form' has issues both in the piecewise smooth and slow-fast settings, but it is clear that \eref{2Fp} is structurally stable, and represents all classes of behaviour that occur both in the piecewise smooth system, and in its blow up to a slow-fast system. 



\bibliography{../grazcat}
\bibliographystyle{plain}

\clearpage

\end{document}